\documentclass[12pt]{amsart}
\usepackage{amssymb}
\usepackage{hyperref}
\usepackage[dvips]{graphics}

\allowdisplaybreaks[1]

\textwidth=155mm
\textheight=200mm
\topmargin=20mm
\hoffset=-17.5mm

\newtheorem{theorem}{Theorem}[section]
\newtheorem{definition}[theorem]{Definition}

\newtheorem{proposition}[theorem]{Proposition}

\begin{document}

\title{Spectral analysis of the free orthogonal matrix}

\author{T. Banica}
\address{T.B.: Department of Mathematics, Toulouse 3 University, 118 route de Narbonne, 31062 Toulouse, France. {\tt banica\,@\,math.ups-tlse.fr}}

\author{B. Collins}
\address{B.C.: Department of Mathematics, Ottawa University and Lyon 1 University, 43 bd du 11 novembre 1918, 69622 Villeurbanne, France. {\tt collins\,@\,math.univ-lyon1.fr}}

\author{P. Zinn-Justin}
\address{P.Z.-J.: Laboratoire de Physique Th\'eorique des Hautes Energies (CNRS, UMR 7589), Univ Pierre et Marie Curie-Paris6, 75252 Paris Cedex, France. {\tt pzinn\,@\,lpthe.jussieu.fr}}

\subjclass[2000]{33D80 (46L54)}
\keywords{Quantum group, Spectral measure}

\begin{abstract}
We compute the spectral measure of the standard generators $u_{ij}$ of the Wang algebra $A_o(n)$. We show in particular that this measure has support $[-2/\sqrt{n+2},2/\sqrt{n+2}]$, and that it has no atoms. The computation is done by using various techniques, involving the general Wang algebra $A_o(F)$, a representation of $SU^q_2$ due to Woronowicz, and several calculations with orthogonal polynomials.
\end{abstract}

\maketitle

\section*{Introduction}

The compact quantum groups were introduced by Woronowicz in \cite{wo1}, \cite{wo2}, \cite{wo3}. They  generalize the compact groups of unitary matrices. They have a number of 
properties similar to compact matrix groups, such as
 a Peter--Weyl type theory for their finite dimensional representations, a Tannaka--Krein type duality, and the existence of a unique left and right invariant Haar state.

The first new examples were given by Woronowicz as one-parameter deformations of the classical Lie groups. These provide compact versions for the algebras $U_q(g)$ constructed by Drinfeld \cite{dri} and Jimbo \cite{jim}, in the case $q\in\mathbb R$.
The first examples with new fusion rules and dimensions were constructed by Wang in the mid-nineties, in two influential papers \cite{wa1}, \cite{wa2}. These quantum groups appear as free versions of the orthogonal, unitary and symmetric groups, $O_n,U_n,S_n$. 

A key point in the understanding of these quantum groups is the explicit computation of their Haar measures. The computation for $SU^q_2$ was done by Masuda, Mimachi, Nakagami, Noumi and Ueno \cite{mm+}, Koornwinder \cite{koo}, and Koelink and Verding \cite{kve}, by using an explicit representation found by Woronowicz in \cite{wo1}.
The problem of computing the Haar measure for Wang's free quantum groups was solved only recently, in \cite{bc1}, \cite{bc2}, \cite{bc3}. The general formula involves a 
free analogue of the Weingarten function \cite{wei}, summed over certain noncrossing partitions.

The formulae obtained so far theoretically allow to fully compute the Haar state. However, they are mathematically effective only for computing small order estimates of the moments. In fact, we have not been able so far to use them in order to derive fine analytic properties of the quantum group for fixed $n$, such as the spectral radius of the standard generators, or the existence of atoms. These questions are of potential importance in connection with the Atiyah conjecture for quantum groups, or towards the computation of the corresponding $L^2$-Betti numbers. For results in this direction, see \cite{kye} 
and \cite{cht}.

In this paper we investigate such questions for Wang's very first example of a free quantum group, namely the free analogue of $O_n$. This is an abstract object, whose  ``algebra of functions'' is defined with generators and relations, as follows:
\[A_o(n)=C^*\left((u_{ij})_{i,j=1,\ldots,n}\Big|\,u={\rm orthogonal}\right)\]

This algebra is by definition a free version of $C(O_n)$, and this gives rise to a series of interesting questions. For instance the distribution of the basic coordinates $u_{ij}\in C(O_n)$ being the hyperspherical law, the free coordinates $u_{ij}\in A_o(n)$ can be regarded as ``free hyperspherical variables'', and it is of particular interest to compute their law.

In this paper we solve the latter question. We shall prove that the density of $\sqrt{n+2}\,u_{ij}$, when pulled back on the unit circle via $z\to z+z^{-1}$, is given by:
\[F(z)=\sum_{r=-\infty}^\infty (-1)^r\frac{q^{r-1}(1+q)^2}{(1+q^{r+1})(1+q^{r-1})}z^{2r} ,\]
where $q\in (-1,0)$ is given by $q+q^{-1}=-n$ with $n> 2$. The proof uses a new method, based on the fact that certain variables in $A_o(n)$ can be modelled by variables over $SU^q_2$.
In terms of moments of the normalized variable $w=\sqrt{n+2}\,u_{ij}$, the odd ones are all zero, and the even ones are given by the following formula:
\[\int w^{2k}=\frac{q+1}{q-1}\cdot\frac{1}{k+1}\sum_{r=-k-1}^{k+1}(-1)^r\begin{pmatrix}2k+2\cr k+1+r\end{pmatrix}\frac{r}{1+q^r}\]

The possible consequences of these formulae, in connection with the considerations in \cite{bc1}, \cite{bpa}, \cite{dgg}, will be discussed in the body of the paper, and in the final section.

The paper is organized as follows. In sections 1--2 we recall the construction and basic combinatorial properties of the Wang algebra $A_o(n)$. In sections 3--5 we state and prove the main results. The final sections, 6--7, contain a few concluding remarks.

\subsection*{Acknowledgments} 

This work was started on the occasion of the Berkeley 2007 workshop ``Free probability and the large $N$ limit'', and we are highly grateful to A.~Guionnet, D.~Shlyakhtenko and D.~Voiculescu for the invitation. T.B.\ and B.C.\ were supported by the ANR ``GALOISINT''. B.C had partial support from NSERC and JSPS. P.Z.-J.\ was supported by EU Marie Curie Research Training Networks ``ENRAGE'' MRTN-CT-2004-005616, ``ENIGMA'' MRT-CT-2004-5652, ESF program ``MISGAM'' and ANR program ``GIMP'' ANR-05-BLAN-0029-01. The three authors were supported by the ANR program ``GRANMA''.

\section{Quantum groups}

Let $O_n$ be the orthogonal group. We denote by $C(O_n)$ the algebra of continuous functions $f:O_n\to\mathbb C$. This is a $C^*$-algebra, with involution and norm given by:
\begin{align*}
f^*&=(x\to\overline{f(x)})\\
||f||&=\sup|f(x)|
\end{align*}
The group operations of $O_n$, namely the multiplication, unit and inversion, produce by transposition a comultiplication, counit and antipode map for $C(O_n)$:
\begin{align*}
\Delta(f)&=((x,y)\to f(xy))\\
\varepsilon(f)&=f(1)\\
S(f)&=(x\to f(x^{-1}))
\end{align*}
Summarizing, the precise structure of $C(O_n)$ is that of a commutative Hopf $C^*$-algebra, in the sense of Woronowicz's fundamental paper \cite{wo2}.

The algebra $C(O_n)$ can be fully understood in terms of the $n^2$ coordinate functions $x_{ij}:O_n\to\mathbb C$. Indeed, it follows from the Stone--Weierstrass theorem that these functions generate $C(O_n)$, and in fact, we have the following presentation result.

\begin{theorem}
$C(O_n)$ is isomorphic to the universal $C^*$-algebra generated by $n^2$ self-adjoint commuting variables $x_{ij}$, with the relations $xx^t=x^tx=1$, where $x=(x_{ij})$ and $x^t=(x_{ji})$. The formulae
\begin{align*} 
\Delta(x_{ij})&=\sum x_{ik}\otimes x_{kj}\\
\varepsilon(x_{ij})&=\delta_{ij}\\
S(x_{ij})&=x_{ji}
\end{align*}
define morphisms of algebras, which are the comultiplication, counit and antipode.
\end{theorem}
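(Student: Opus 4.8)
The plan is to use Gelfand duality, together with the elementary fact that orthogonal matrices form a group. Write $A$ for the universal $C^*$-algebra in the statement: it is generated by self-adjoint, pairwise commuting elements $x_{ij}$ subject to $xx^t=x^tx=1$, where the symbol $1$ is the unit. The first thing to check is that this $A$ is genuinely well-defined, i.e.\ that the supremum of $C^*$-seminorms over all admissible representations is finite. This is immediate: the relation $\sum_k x_{ik}x_{jk}=\delta_{ij}$ gives $x_{ij}^2\le\sum_k x_{ik}^2=1$, so $\|x_{ij}\|\le 1$ in any representation. Since the $x_{ij}$ commute and are self-adjoint, $A$ is a commutative unital $C^*$-algebra, hence $A\cong C(\mathrm{Spec}\,A)$ by Gelfand's theorem.

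Next I would identify the spectrum with $O_n$. A character $\chi\in\mathrm{Spec}\,A$ is determined by the scalars $t_{ij}:=\chi(x_{ij})$, which are real because the $x_{ij}$ are self-adjoint, and which satisfy $\sum_k t_{ik}t_{jk}=\sum_k t_{ki}t_{kj}=\delta_{ij}$, i.e.\ $t=(t_{ij})\in O_n$. Conversely, for any $t\in O_n$ the real scalars $t_{ij}$ form a family of commuting self-adjoint elements of $\mathbb{C}$ obeying the defining relations, so by universality they determine a character $\chi_t$. Thus $t\mapsto\chi_t$ is a bijection $O_n\to\mathrm{Spec}\,A$; it is continuous from the compact space $O_n$ to the Hausdorff space $\mathrm{Spec}\,A$ with its weak-$*$ topology, because $t\mapsto\chi_t(x_{ij})=t_{ij}$ is continuous and the $x_{ij}$ generate $A$, hence it is a homeomorphism. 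Under this identification the canonical generators $x_{ij}\in A$ become exactly the coordinate functions $O_n\to\mathbb{C}$, proving the first assertion.

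For the Hopf structure I would again invoke the universal property. Set $y_{ij}=\sum_k x_{ik}\otimes x_{kj}\in C(O_n)\otimes C(O_n)=C(O_n\times O_n)$. These elements are self-adjoint and commute, and a direct matrix computation shows $yy^t=y^ty=1$ — this is precisely the statement that a product of two orthogonal matrices is orthogonal. Hence there is a unique $*$-homomorphism $\Delta\colon C(O_n)\to C(O_n)\otimes C(O_n)$ with $\Delta(x_{ij})=y_{ij}$, and evaluating on coordinates gives $\Delta(f)=((x,y)\mapsto f(xy))$, since $(xy)_{ij}=\sum_k x_{ik}y_{kj}$. Similarly the identity matrix $(\delta_{ij})$ satisfies the relations, giving a character $\varepsilon$ with $\varepsilon(x_{ij})=\delta_{ij}$, which is evaluation at $1\in O_n$; and the transpose $x^t=(x_{ji})$ satisfies $x^t(x^t)^t=x^tx=1$, giving a $*$-homomorphism $S$ with $S(x_{ij})=x_{ji}$, which is $f\mapsto(x\mapsto f(x^{-1}))$ because $x^{-1}=x^t$ on $O_n$. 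Note that $S$ is automatically an algebra homomorphism, not merely an anti-homomorphism, since $C(O_n)$ is commutative. The coassociativity, counit and antipode identities then follow either by checking them on the generators, or simply because they are the transposes of associativity, the unit law, and the inverse law in the group $O_n$.

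The only genuinely delicate points are the well-definedness of the universal $C^*$-norm and the identification of the spectrum with $O_n$ \emph{as a topological space}; once those are in place, everything else is the group structure of $O_n$ transported through Gelfand duality. Accordingly I expect the main (and still minor) obstacle to be the careful bookkeeping of the universal construction and of the weak-$*$ topology on $\mathrm{Spec}\,A$, rather than any substantive difficulty.
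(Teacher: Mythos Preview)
Your proof is correct and follows the same approach as the paper, namely Gelfand duality applied to the commutative universal $C^*$-algebra; the paper's own proof is a two-line sketch that merely asserts the coordinate functions satisfy the relations and that the isomorphism ``follows from the Gelfand theorem'', so you have simply supplied the details the paper omits.
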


\begin{proof}
As already mentioned, $C(O_n)$ is indeed generated by the coordinate functions $x_{ij}$, which moreover satisfy the relations in the statement. Thus $C(O_n)$ is a quotient of the universal algebra in the statement. The fact that we have indeed an isomorphism is well-known, and follows from the Gelfand theorem.
\end{proof}

The free analogue of $C(O_n)$, constructed by Wang in \cite{wa1}, is obtained by ``removing the commutativity'' from the above abstract characterization of $C(O_n)$.

\begin{definition}
$A_o(n)$ is the universal $C^*$-algebra generated by $n^2$ self-adjoint variables $u_{ij}$, with the relations $uu^t=u^tu=1$, where $u=(u_{ij})$ and $u^t=(u_{ji})$. The formulae
\begin{align*} 
\Delta(u_{ij})&=\sum u_{ik}\otimes u_{kj}\\
\varepsilon(u_{ij})&=\delta_{ij}\\
S(u_{ij})&=u_{ji}
\end{align*}
define morphisms of algebras from $A_o(n)$ to $A_o(n)\otimes A_o(n)$
(resp. $\mathbb{C}$, $A_o(n)^{op}$), called comultiplication (resp. counit and antipode).
\end{definition}

This algebra satisfies Woronowicz's axioms in \cite{wo2}. Together with the above considerations regarding $O_n$, this explains the heuristic formula $A_o(n)=C(O_n^+)$, where $O_n^+$ is a compact quantum group, called ``free version'' of $O_n$.

The study of the fine analytic properties of $A_o(n)$ will be done by using a multi-parameter deformation of this algebra, constructed by Van Daele and Wang in \cite{vwa}.
This deformation, denoted $A_o(F)$, depends on a matrix of parameters $F$. 

\begin{definition}
In what follows, $F\in GL_n(\mathbb R)$ will be a matrix satisfying $F^2=1$.
\end{definition}

We should mention that the original definition in \cite{vwa} involves an arbitrary matrix $F\in GL_n(\mathbb C)$. However, due to the various results in \cite{ban}, \cite{bdv}, not to be explained here, it is known that the relevant case is that of the matrices $F\in GL_n(\mathbb R)$ satisfying $F^2=\pm 1$. The case $F^2=1$, including the algebras $A_o(n)$, is the one that we are interested in. 

\begin{definition}
$A_o(F)$ is the universal $C^*$-algebra generated by $n^2$ elements $u_{ij}$, with the relations $u=F\bar{u}F=$ unitary, where $u=(u_{ij})$ and $\bar{u}=(u_{ij}^*)$. The formulae
\begin{align*} 
\Delta(u_{ij})&=\sum u_{ik}\otimes u_{kj}\\
\varepsilon(u_{ij})&=\delta_{ij}\\
S(u_{ij})&=u_{ji}^*
\end{align*}
define morphisms of algebras from $A_o(F)$ to $A_o(F)\otimes A_o(F)$
(resp. $\mathbb{C}$, $A_o(F)^{op}$), called comultiplication (resp. counit and antipode).
\end{definition}

This algebra satisfies also Woronowicz's axioms in \cite{wo2}. In general, the square of the antipode is no longer the identity, $S^2\neq id$.

For $F=I_n$, we have $A_o(F)=A_o(n)$. In general, $A_o(F)$ should be thought of as being a deformation of $A_o(n)$, in the same way as $C(SU^q_2)$ is a deformation of $C(SU_2)$.
For the purposes of this paper, the motivation for introducing the algebra $A_o(F)$ comes precisely from its close relation with $C(SU^q_2)$, which is as follows. 

\begin{theorem}
For any $q\in [-1,0)$, the matrix
\[F=\begin{pmatrix}0&\sqrt{-q}\cr 1/\sqrt{-q}&0\end{pmatrix}\]
satisfies $F^2=1$, and we have $A_o(F)=C(SU^q_2)$.
\end{theorem}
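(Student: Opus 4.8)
The statement $F^2=1$ is immediate: writing $s=\sqrt{-q}$, which is a positive real number since $q\in[-1,0)$, we have $F=\begin{pmatrix}0&s\\ 1/s&0\end{pmatrix}$, so $F\in GL_2(\mathbb R)$ and $F^2=\begin{pmatrix}s\cdot(1/s)&0\\ 0&(1/s)\cdot s\end{pmatrix}=I_2$. The real content is the identification $A_o(F)=C(SU^q_2)$, and the plan is to show that, after a suitable renaming of generators, the two universal $C^*$-algebras are presented by literally the same generators and relations, and carry the same Hopf algebra structure; this is essentially the identification that motivated the introduction of $A_o(F)$ in \cite{vwa}.

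First I would recall Woronowicz's presentation of $C(SU^q_2)$: it is the universal $C^*$-algebra generated by two elements $\alpha,\gamma$ subject to the condition that the matrix $\begin{pmatrix}\alpha & -q\gamma^*\\ \gamma & \alpha^*\end{pmatrix}$ be unitary (equivalently, $\gamma$ normal, $\alpha^*\alpha+\gamma^*\gamma=1$, $\alpha\alpha^*+q^2\gamma\gamma^*=1$, $\alpha\gamma=q\gamma\alpha$, $\alpha\gamma^*=q\gamma^*\alpha$), with comultiplication, counit and antipode acting on the entries of this matrix by the standard coproduct-of-a-corepresentation formulas. Next I would carry out the two-by-two computation of $F\bar uF$ for an arbitrary $u=(u_{ij})$: one finds $F\bar uF=\begin{pmatrix}u_{22}^* & -q\,u_{21}^*\\ -u_{12}^*/q & u_{11}^*\end{pmatrix}$, so the defining relation $u=F\bar uF$ of $A_o(F)$ is equivalent to the single pair of conditions $u_{22}=u_{11}^*$ and $u_{12}=-q\,u_{21}^*$ (the remaining two equations being the adjoints of these, hence automatic). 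Setting $\alpha:=u_{11}$ and $\gamma:=u_{21}$, the matrix $u$ becomes exactly $\begin{pmatrix}\alpha & -q\gamma^*\\ \gamma & \alpha^*\end{pmatrix}$, and the remaining relation ``$u$ unitary'' becomes precisely Woronowicz's list above. By the universal property of both constructions, the obvious map between the two presentations is a $*$-isomorphism $A_o(F)\cong C(SU^q_2)$.

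It then remains to check that this isomorphism intertwines the quantum group structures, which is a matter of comparing formulas on the entries of the fundamental matrix: under $\alpha=u_{11}$, $\gamma=u_{21}$, the rules $\Delta(u_{ij})=\sum_k u_{ik}\otimes u_{kj}$, $\varepsilon(u_{ij})=\delta_{ij}$, $S(u_{ij})=u_{ji}^*$ from the definition of $A_o(F)$ read off as the standard comultiplication, counit and antipode of $C(SU^q_2)$ (e.g.\ $\Delta(\alpha)=\alpha\otimes\alpha-q\gamma^*\otimes\gamma$, $\Delta(\gamma)=\gamma\otimes\alpha+\alpha^*\otimes\gamma$, $\varepsilon(\alpha)=1$, $\varepsilon(\gamma)=0$, $S(\alpha)=\alpha^*$, $S(\gamma)=-q\gamma$). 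Since these maps are already known to be well defined on both sides, agreement on generators forces agreement everywhere.

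The only genuinely delicate point is bookkeeping of conventions: one must fix the sign of $q$, the placement of $q$ versus $q^{-1}$, and the range $q\in[-1,0)$ consistently with the chosen normalization of $C(SU^q_2)$, and note that a different but equivalent choice of $F$ would yield an isomorphic algebra, so the precise matrix displayed in the statement is itself only pinned down up to such a choice. Once conventions are fixed, everything reduces to the routine $2\times2$ verifications above; there is no analytic difficulty, the universal property handling the passage from generators-and-relations to $C^*$-algebras.
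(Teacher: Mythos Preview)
Your proof is correct and follows essentially the same route as the paper: compute the constraint $u=F\bar uF$ explicitly to force $u=\begin{pmatrix}\alpha&-q\gamma^*\\ \gamma&\alpha^*\end{pmatrix}$, identify the unitarity relations with Woronowicz's five relations for $C(SU^q_2)$, and match the Hopf structures via the fundamental corepresentation. The only cosmetic difference is that the paper first rescales to $G=\sqrt{-q}\,F=\begin{pmatrix}0&-q\\1&0\end{pmatrix}$ and uses $u=G\bar uG^{-1}$ before doing the $2\times2$ computation, whereas you compute $F\bar uF$ directly; the outcome and the logic are identical.
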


\begin{proof}
This is known since \cite{ban}, but since the above statement is a bit different from the original one, we will present below a self-contained proof.

First, the matrix in the statement satisfies indeed $F^2=1$. In order to compute the corresponding algebra, consider the matrix $G=\sqrt{-q}F$. We have:
\[G=\begin{pmatrix}0&-q\cr 1&0\end{pmatrix}\]
In terms of this matrix, the defining relations for $A_o(F)$ are $u=G\bar{u}G^{-1}=$ unitary. In order to find a simple writing for these relations, the first observation is that $u=G\bar{u}G^{-1}$ tells us that $u$ must be of the following special form:
\[u=\begin{pmatrix}\alpha&-q\gamma^*\cr \gamma&\alpha^*\end{pmatrix}\]
In other words, $A_o(F)$ is the universal algebra generated by two elements $\alpha,\gamma$, with the relations making $u$ unitary. But these unitarity conditions are:
\begin{enumerate}
\item $\alpha\gamma=q\gamma\alpha$.
\item $\alpha\gamma^*=q\gamma^*\alpha$.
\item $\gamma\gamma^*=\gamma^*\gamma$.
\item $\alpha^*\alpha+\gamma^*\gamma=1$.
\item $\alpha\alpha^*+q^2\gamma\gamma^*=1$.
\end{enumerate}
We recognize here the relations in \cite{wo2} defining the algebra $C(SU^q_2)$, and it follows that we have an isomorphism of algebras $A_o(F)\simeq C(SU^q_2)$ as in the statement. 
Since the Hopf algebra structure of $C(SU^q_2)$ given in \cite{wo1} is precisely the one making $u$ a corepresentation, this isomorphism is a Hopf algebra one, and we are done.
\end{proof}

\section{Diagrams and integrals}

In this section we discuss a number of basic representation theory results regarding $A_o(F)$, and  we start the computation of the laws of coefficients.

It is known since Woronowicz's paper \cite{wo1} that the representation theory of $SU^q_2$ is quite similar to that of $SU_2$, in the sense that the ``diagrams are the same''. This was extended in \cite{ban} to the case of quantum groups associated to arbitrary algebras of type $A_o(F)$. In what follows we present a brief description of this material.
We first recall that, according to the general theory in \cite{wo2}, $u$ is a corepresentation of $A_o(F)$. Its tensor powers are by definition the following corepresentations: 
\[(u^{\otimes k})_{i_1\ldots i_k,j_1\ldots j_k}=u_{i_1j_1}\ldots
u_{i_kj_k}\]
The basic representation theory problem is to compute the fixed vectors of these tensor powers,
which we denote by $Fix(u^{\otimes k})$.
First, we have here the following elementary result.

\begin{proposition}
\label{prop-TK}
Let $u$ be the fundamental corepresentation of $A_o(F)$.
\begin{enumerate}
\item $Fix(u^{\otimes 2k+1})=\{0\}$.

\item $Fix(u^{\otimes 2k})\neq\{0\}$.
\end{enumerate}
\end{proposition}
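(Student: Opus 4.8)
The plan is to exhibit an explicit nonzero fixed vector in $u^{\otimes 2}$ and to argue via tensor categories that every odd tensor power has only the trivial fixed vector.

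For part (2), I would start from the defining relation $u = F\bar u F$ with $F^2 = 1$. Writing this out, the relation says $u_{ij} = \sum_{k,l} F_{ik} u_{kl}^* F_{lj}$, equivalently $F u = \bar u F$, so that $\sum_{ij} F_{ij} u_{ia} u_{jb}$ should be a scalar multiple of $\delta_{ab}$; more conceptually, the vector $\xi = \sum_{ij} F_{ij}\, e_i \otimes e_j \in \mathbb C^n \otimes \mathbb C^n$ is fixed by $u^{\otimes 2}$. I would verify this directly: applying $u^{\otimes 2}$ to $\xi$ and using $uu^t = u^t u = 1$ together with $u = F\bar u F$ shows $(u\otimes u)\xi = \xi \otimes 1$ in $\mathbb C^n \otimes \mathbb C^n \otimes A_o(F)$. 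Since $F \in GL_n(\mathbb R)$ is invertible, $\xi \neq 0$, so $Fix(u^{\otimes 2})\neq\{0\}$. Tensoring $\xi$ into any fixed vector of $u^{\otimes 2k-2}$ (using that $Fix$ is closed under tensor products of corepresentations and that the trivial corepresentation is a unit) then produces a nonzero element of $Fix(u^{\otimes 2k})$, giving the claim for all $k \geq 1$; for $k=0$ the fixed space is $\mathbb C$, also nonzero.

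For part (1), the key point is a parity obstruction. One clean way: since $u$ is a unitary corepresentation, the counit $\varepsilon$ gives $\varepsilon(u_{ij}) = \delta_{ij}$, so applying $(\mathrm{id}\otimes\varepsilon)$ to the fixed-vector relation $u^{\otimes k}\eta = \eta \otimes 1$ just returns $\eta = \eta$ and is useless; instead I would use a $\mathbb Z$-grading. Consider the one-dimensional corepresentation coming from... actually the cleanest argument is categorical: the fixed vectors of $u^{\otimes k}$ are spanned by the Temperley--Lieb diagrams (noncrossing pairings of $k$ points), as established in \cite{wo1}, \cite{ban}; a pairing of $k$ points exists only when $k$ is even, so $Fix(u^{\otimes 2k+1}) = \{0\}$. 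However, since the present section is precisely building up to that result, I would prefer a self-contained argument: equip $\mathbb C^n$ with no extra structure but note that $A_o(F)$ admits a one-parameter family of automorphisms or, more simply, that the map $u_{ij}\mapsto -u_{ij}$ extends to an automorphism $\sigma$ of $A_o(F)$ (the relations $u = F\bar u F$ and $uu^* = u^*u = 1$ are all homogeneous of even degree, hence invariant under $u \mapsto -u$... but $u\mapsto -u$ is degree-preserving only up to sign). Concretely: $\sigma(u_{ij}) = -u_{ij}$ preserves all defining relations, so it is a well-defined $*$-automorphism, and it is compatible with $\Delta$ in the sense that $(\sigma\otimes\sigma)\Delta = \Delta\sigma$. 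If $\eta \in Fix(u^{\otimes 2k+1})$, then applying $\sigma$ to the identity $\sum_j (u^{\otimes(2k+1)})_{i,j}\,\eta_j = \eta_i$ multiplies the left side by $(-1)^{2k+1} = -1$, forcing $\eta_i = -\eta_i$, hence $\eta = 0$.

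The main obstacle is making the automorphism argument for part (1) fully rigorous without circularity: one must check that $u_{ij}\mapsto -u_{ij}$ genuinely extends to the universal $C^*$-algebra (immediate, since negating a unitary/orthogonal-type generator preserves the relations) and that it intertwines the coaction on $(\mathbb C^n)^{\otimes k}$ correctly with the overall sign $(-1)^k$. Part (2) is routine once the vector $\xi$ associated to $F$ is written down; the only care needed is the bookkeeping in checking $(u\otimes u)\xi = \xi\otimes 1$, which uses both $uu^t = 1$ and the relation $Fu = \bar uF$ — and the observation that because $F^2 = 1$ with $F$ real, $F^t = F^{-1} = F$ is symmetric, so $\xi$ is (anti)symmetric as appropriate, though symmetry is not actually needed for nonvanishing.
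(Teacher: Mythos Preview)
Your proposal is correct and follows essentially the same route as the paper: the sign automorphism $u_{ij}\mapsto -u_{ij}$ for part~(1), and the explicit fixed vector $\xi_F=\sum_{i,j} F_{ij}\,e_i\otimes e_j$ together with its tensor powers $\xi_F^{\otimes k}$ for part~(2). One small slip in your closing remark: $F^2=1$ with $F$ real does \emph{not} imply $F^t=F$ (the matrix $F$ in Theorem~1.5 is a counterexample whenever $q\neq -1$), but as you yourself note this symmetry plays no role in the argument.
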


\begin{proof}
(1) It follows from definitions that $u_{ij}\to -u_{ij}$ defines an automorphism of $A_o(F)$, and this prevents $u^{\otimes 2k+1}$ from having non-trivial fixed vectors.

(2) We first examine the case $k=1$. We have:
\begin{align*}
u^{\otimes 2}
&=u\otimes u\\
&=u\otimes F\bar{u}F\\
&=(1\otimes F)(u\otimes\bar{u})(1\otimes F)
\end{align*}
Now since $u$ is unitary, it follows from definitions that $u\otimes\bar{u}$ fixes the canonical vector of $\mathbb C^n\otimes\mathbb C^n$, namely $\xi=\Sigma e_i\otimes e_i$. Thus $u^{\otimes 2}$ fixes the following vector:
\begin{align*}
\xi_F
&=(1\otimes F)\xi\\
&=\sum e_i\otimes Fe_i\\
&=\sum F_{ij}e_i\otimes e_j
\end{align*}
In the general case now, $u^{\otimes 2k}$ fixes the vector $\xi_F^{\otimes k}$, and we are done.
\end{proof}

In order to explicitly describe the space of fixed vectors of $u^{\otimes 2k}$, we introduce the following definition.

\begin{definition}
$TL(k)$ is the set of Temperley--Lieb diagrams,
\[p=\left\{ \begin{matrix}\cap\Cap\cap & \leftarrow &k\mbox{ strings}\cr \cdot\,\cdot\,\cdot\,\cdot\,\cdot& \leftarrow &2k\,\,\mbox{points}\end{matrix}\right\}\]
where the strings join pairs of points, and do not cross.
\end{definition}
The strings, as in fact the whole pictures themselves, are taken up to planar isotopy. We consider the strings as being oriented, going from left to right.
Observe that the elements of $TL(k)$ are in one-to-one correspondence with the noncrossing pair-partitions of the set $\{1,\ldots,2k\}$. With this identification, a string $\{s_l\curvearrowright s_r\}$ of the diagram corresponds to a block $\{s_l<s_r\}$ of the partition.

\begin{definition}
Given $p\in TL(k)$ and a multi-index $i=(i_1,\ldots,i_{2k})$, we set
\[\delta_p^F(i)=\prod_{s\in p}F_{i_{s_l}i_{s_r}}\]
where the product is over all the strings $s=\{s_l\curvearrowright s_r\}$ of $p$.
\end{definition}

Here are a few examples, of certain interest for the considerations to follow:
\begin{align*}
\delta_{\cap}^F(ab)&=F_{ab}\\
\delta_{\cap\cap}^F(abcd)&=F_{ab}F_{cd}\\
\delta_{\Cap}^F(abcd)&=F_{ad}F_{bc}\\
\delta_{\cap\Cap}^F(abcdef)&=F_{ab}F_{cf}F_{de}\\
\delta_{\Cap\cap}^F(abcdef)&=F_{ad}F_{bc}F_{ef}
\end{align*}
We are now in position of stating a key technical result regarding $A_o(F)$, which will allow us to compute the Haar functional. Let $e_1,\ldots,e_n$ be the standard basis of $\mathbb C^n$.

\begin{theorem}
\label{thm-dual-TK}
The family of vectors
\[\xi_p=\sum_i\delta_p^F(i)e_{i_1}\otimes\ldots\otimes e_{i_{2k}}\]
with $p\in TL(k)$ 
spans
the space of fixed vectors of $u^{\otimes 2k}$.
\end{theorem}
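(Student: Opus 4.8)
We must show that the vectors $\xi_p$ with $p\in TL(k)$ span $Fix(u^{\otimes 2k})$. One inclusion is essentially Proposition~\ref{prop-TK}(2): the fixed vector $\xi_F^{\otimes k}$ corresponds to the diagram consisting of $k$ nested or parallel caps, and by applying the comultiplication and using the coassociativity/bialgebra structure one checks that each $\xi_p$ is indeed fixed. More concretely, I would first verify directly that $u^{\otimes 2}\xi_p=\xi_p$ for the basic building block $p=\cap$, i.e. that $u\otimes F\bar uF$ fixes $\xi_F$, which is exactly what the proof of Proposition~\ref{prop-TK} does; then observe that the general $\xi_p$ is obtained from copies of $\xi_F$ by tensoring and by inserting identity strands, and that both operations preserve the property of being a fixed vector. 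This gives $\mathrm{span}\{\xi_p\}\subseteq Fix(u^{\otimes 2k})$ with little effort.

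The substantial content is the reverse inclusion, and here the natural route is Woronowicz's Tannaka--Krein duality together with the known representation theory of $A_o(F)$ from \cite{ban}. The Temperley--Lieb diagrams $p\in TL(k)$, read as morphisms, generate (under composition, tensor product and the $*$-operation) a tensor category $\mathcal C$ whose morphism spaces are $\mathrm{Hom}(u^{\otimes k},u^{\otimes l})=\mathrm{span}\{\xi_p:p\in TL(k,l)\}$. By Woronowicz's Tannaka--Krein theorem there is a Hopf algebra $A_{TL}$ whose corepresentation category is exactly $\mathcal C$, and one has a surjection $A_{TL}\twoheadrightarrow A_o(F)$ (since the generating relations $u=F\bar uF=$ unitary are precisely the statement that the relevant diagrammatic vectors are fixed). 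The point to establish is that this surjection is an isomorphism, equivalently that $A_o(F)$ has "no more" fixed vectors than the diagrams produce. This is where one invokes the fusion-rule computation: the Temperley--Lieb category generated this way is already rigid and has the fusion rules of $SU_2$, so $\dim Fix(u^{\otimes 2k})$ equals the Catalan number $C_k=|TL(k)|$; since the $\xi_p$ are known to be linearly independent when $F$ is generic (and one reduces to that case, or checks independence directly from the $\delta_p^F$), a dimension count closes the argument.

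**The main obstacle.** The delicate point is the linear independence of the $\xi_p$, or more precisely showing that $\dim Fix(u^{\otimes 2k})$ is not strictly larger than $C_k$. Linear independence of the $\xi_p$ themselves can fail or degenerate for special $F$ (small $n$, or $F$ with many vanishing entries), so one must argue that for the matrices $F$ of interest — and in particular for $F=I_n$ with $n\geq 2$, and for the $2\times2$ matrix of Theorem~1.6 — the Gram matrix of the $\xi_p$ (a Temperley--Lieb--type matrix with entries powers of $n$ or of $q+q^{-1}$) is nonsingular; this is a classical but nontrivial computation of a Temperley--Lieb Gram determinant. The cleanest packaging is to cite \cite{ban}: there it is proved that $A_o(F)$ has the same fusion rules as $SU_2$ and that the fixed-vector spaces are exactly the Temperley--Lieb spaces, which is the present theorem. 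So in the write-up I would state the inclusion $\subseteq$ by the direct check above, and obtain the reverse inclusion and spanning from the representation-theoretic results of \cite{ban}, with the Gram-determinant nondegeneracy being the technical heart imported from there.
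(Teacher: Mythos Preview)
Your core approach matches the paper's: identify $\xi_\cap$ with the vector $\xi_F$ from Proposition~\ref{prop-TK}, observe that the defining relations of $A_o(F)$ are precisely the statement that $\xi_\cap$ is fixed by $u^{\otimes 2}$, and invoke Woronowicz's Tannaka--Krein duality \cite{wo3} (ultimately citing \cite{ban}) to conclude that the corepresentation category is generated by $\xi_\cap$, so the fixed-vector spaces are spanned by the Temperley--Lieb diagrams.

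Where you diverge from the paper is in what you flag as the ``main obstacle''. The theorem only asserts \emph{spanning}, and the Tannaka--Krein argument yields that directly: once the category is known to be generated by $\xi_\cap$, every morphism space---in particular $\mathrm{Hom}(1,u^{\otimes 2k})=Fix(u^{\otimes 2k})$---is spanned by diagrams built from $\xi_\cap$, with no dimension count required. The linear independence of the $\xi_p$ and the Temperley--Lieb Gram determinant are simply not needed here (they become relevant later, in Theorem~\ref{thm-weingarten}, where one must invert the Gram matrix). Your proposed route through the $SU_2$ fusion rules is also logically inverted: in \cite{ban} the fusion rules are \emph{deduced} from the Temperley--Lieb description of the Hom spaces, so using them to establish spanning would be circular. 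The paper's proof accordingly omits all of this and goes in one step from ``$A_o(F)$ is presented by the relation fixing $\xi_\cap$'' to ``the category is generated by $\xi_\cap$'' via \cite{wo3}.
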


\begin{proof}
This is known since \cite{ban}, the idea being the following. First, the vector $\xi_\cap$ is nothing but the vector $\xi_F$ appearing in the proof of Proposition \ref{prop-TK}.

Going back to the proof of Proposition 
\ref{prop-TK},
we see that having this vector $\xi_\cap$ fixed by $u^{\otimes 2}$ is equivalent to assuming that $u=F\bar{u}F$ is unitary.  
More precisely, since $A_o(F)$ is presented by these relations, it follows from \cite{wo3} that its category of corepresentations must be generated by $\xi_\cap$. This gives the result.
\end{proof}
Let us recall that, according to the general results of Woronowicz in \cite{wo2}, the algebra $A_o(n)$ has a Haar functional, which we denote by an integral sign.
The basic integration problem is the computation of the following integrals:
\[P_{ij}=\int u_{i_1j_1}\ldots u_{i_kj_k}\]
As explained in \cite{bc1}, it follows from the general results in \cite{wo2} that the $n^k\times n^k$ matrix $P$ formed by these integrals is the orthogonal projection onto $Fix(u^{\otimes k})$.

In the case where $k$ is odd, we can combine this observation with the first assertion in Proposition 
\ref{prop-TK},
and we get $P_{ij}=0$.
In the case where $k$ is even, we can compute $P$ by using the basis of fixed vectors provided by Theorem 
\ref{thm-dual-TK},
and we get the following result.

\begin{theorem}
\label{thm-weingarten}
We have the Weingarten type formula
\[\int u_{i_1j_1}\ldots u_{i_{2k}j_{2k}}=\sum_{pq}\delta_p^F(i)\delta_q^F(j)W_{kN}(p,q)\]
where $W_{kN}=G_{kN}^{-1}$, with $G_{kN}(p,q)=N^{l(p,q)}$,
$l(p,q)$ are the number of loops obtained by
the gluing of $p$ and $q$, and $N=\sum F_{ij}^2$.
\end{theorem}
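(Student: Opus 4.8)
The plan is to identify the matrix $P=(P_{ij})$, $P_{ij}=\int u_{i_1j_1}\cdots u_{i_{2k}j_{2k}}$, with the orthogonal projection onto $V:=Fix(u^{\otimes 2k})$ — which it is, by the discussion preceding the statement — and then to exploit the spanning family $(\xi_p)_{p\in TL(k)}$ of $V$ furnished by Theorem~\ref{thm-dual-TK}. Granting for the moment that this family is \emph{linearly independent}, hence a basis of $V$, a routine linear algebra computation identifies the orthogonal projection onto $V$ as
\[P=\sum_{p,q\in TL(k)}W(p,q)\,\xi_p\xi_q^*,\qquad W=G^{-1},\quad G(p,q)=\langle\xi_q,\xi_p\rangle:\]
indeed this operator is self-adjoint (since $G$ is real symmetric), has range contained in $V$, and satisfies $P\xi_r=\sum_{p}\big(\sum_qW(p,q)G(q,r)\big)\xi_p=\xi_r$ because $WG=\mathrm{id}$, so it acts as the identity on $V$; by uniqueness it is the Haar projection $P$. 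Reading off matrix entries and using that $F$ is real, $(\xi_p\xi_q^*)_{ij}=\delta_p^F(i)\,\overline{\delta_q^F(j)}=\delta_p^F(i)\delta_q^F(j)$, which is the asserted formula provided we check that $G=G_{kN}$, i.e.\ that $G(p,q)=N^{l(p,q)}$.

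To evaluate $G(p,q)=\langle\xi_q,\xi_p\rangle=\sum_i\delta_p^F(i)\delta_q^F(i)$, the cleanest route is through the conjugate (zigzag) equations for the vector $\xi_\cap=\xi_F=\sum_{ij}F_{ij}\,e_i\otimes e_j$ that appeared in the proof of Proposition~\ref{prop-TK}. Writing $\xi_\cap^*\colon e_i\otimes e_j\mapsto F_{ij}$ for the adjoint functional (here $\bar F=F$ is used), one checks directly from $F^2=1$ that
\[\xi_\cap^*\xi_\cap=\sum_{ij}F_{ij}^2=N,\qquad(\xi_\cap^*\otimes 1)(1\otimes\xi_\cap)=(1\otimes\xi_\cap^*)(\xi_\cap\otimes 1)=\mathrm{id}_{\mathbb C^n}.\]
Now $G(p,q)$ is precisely the scalar obtained by composing the Temperley--Lieb diagram $p$ with the vertically reflected diagram of $q$; the two zigzag identities allow one to straighten all the strings, and each of the $l(p,q)$ closed loops of $p\cup q$ so produced contributes a factor $\xi_\cap^*\xi_\cap=N$. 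Hence $G(p,q)=N^{l(p,q)}=G_{kN}(p,q)$, so $W=W_{kN}$ and the formula follows.

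It remains to justify the linear independence assumed above, equivalently the invertibility of $G_{kN}$. Since $F^2=1$ gives $\mathrm{Tr}(F^2)=n$, the Cauchy--Schwarz inequality in Hilbert--Schmidt norm yields $N=\|F\|_{HS}^2=\|F^t\|_{HS}^2\ge|\mathrm{Tr}(F\cdot F)|=n\ge 2$. For $N\ge 2$ the matrix $\big(N^{l(p,q)}\big)_{p,q\in TL(k)}$ is the Gram matrix of the Temperley--Lieb algebra $TL_k(N)$ and is well known to be invertible; alternatively, for $N\ge 2$ the self-dual object $\mathbb C^n$ equipped with $\xi_\cap$ generates a tensor category with the same fusion rules as $SU_2$, so $\dim V=\mathrm{card}\,TL(k)$ and the spanning family $(\xi_p)$ is forced to be a basis.

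The one genuinely delicate point is the loop evaluation in the middle step: since $F$ need not be symmetric, along a loop of $p\cup q$ the edges of $p$ and $q$ are traversed with orientations that a priori mix $F$ and $F^t$, and one cannot simply collapse each edge using $F^2=1$. Routing the computation through the zigzag equations for $\xi_\cap$ — which are the intrinsic form of planar self-duality and use only $F^2=1$ and $\bar F=F$ — is exactly what makes it orientation-free, so I would present those two zigzag identities together with $\xi_\cap^*\xi_\cap=N$ as the computational core of the argument; the surrounding linear algebra is then mechanical.
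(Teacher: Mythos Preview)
Your proof is correct and follows essentially the same route as the paper's: identify the Haar matrix with the orthogonal projection onto $Fix(u^{\otimes 2k})$, express that projection via the Gram matrix of the Temperley--Lieb vectors $\xi_p$, and compute $\langle\xi_p,\xi_q\rangle=N^{l(p,q)}$. The paper's own proof is a brief sketch that defers the linear algebra to \cite{bc1} and simply asserts the Gram computation; you have supplied the details, in particular the zigzag identities for $\xi_\cap$ that make the loop evaluation orientation-free, and a Cauchy--Schwarz bound $N\ge n$ to secure invertibility of $G_{kN}$.
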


\begin{proof}
According to the above considerations, and to some linear algebra computations explained as well in \cite{bc1}, the integral on the left can be expressed in terms of the Gram matrix of the basis in Theorem 
\ref{thm-dual-TK}.
This Gram matrix is given by:
\begin{align*}
<\xi_p,\xi_q>
&=\sum_i\delta_p^F(i)\delta_q^F(i)\\
&=N^{l(p,q)}\\
&=G_{kN}(p,q)
\end{align*}
This gives the formula in the statement.
\end{proof}

The general formula in Theorem \ref{thm-weingarten}
has a number of standard applications, most of which were already worked out in \cite{bc1}, in the $F=I_n$ case. Without getting into details, let us just mention that the main results in \cite{bc1} indeed extend.

In the rest of this paper we shall focus on the problem mentioned in the introduction, namely the computation of the law of $u_{ij}$. Our first result in this sense is the following.

\begin{proposition}
\label{prop-equivalence}
Let $n\in\mathbb N$, $n\ge 2$, and consider the number $q\in [-1,0)$ satisfying $q+q^{-1}=-n$. Then the following variables have the same law:
\begin{enumerate}
\item $\sqrt{n+2}\,u_{ij}\in A_o(n)$.
\item $\alpha+\alpha^*+\gamma-q\gamma^*\in C(SU^q_2)$.
\end{enumerate}
\end{proposition}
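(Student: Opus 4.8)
The plan is to compute the moments of both variables by means of the Weingarten formula of Theorem~\ref{thm-weingarten}, and to observe that they coincide. The key preliminary remark is that, although $A_o(n)=A_o(I_n)$ and $C(SU^q_2)=A_o(F)$ involve matrices of different sizes, the parameter governing the integration is $N=\sum F_{ij}^2$, and this number agrees on the two sides: for $I_n$ one has $N=\sum_{i,j}(I_n)_{ij}^2=n$, while for the matrix $F$ realizing $C(SU^q_2)$ one has $N=(\sqrt{-q})^2+(1/\sqrt{-q})^2=-q-q^{-1}=n$, using $q+q^{-1}=-n$. Consequently the same Weingarten function $W_{kn}$ occurs in both computations. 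I also record the elementary fact that $s_F:=\sum_{i,j}F_{ij}=\sqrt{-q}+1/\sqrt{-q}$ satisfies $s_F^2=-q-q^{-1}+2=n+2$; this is where the normalization $\sqrt{n+2}$ enters.

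First I would rewrite the variable in (2). Going back to the proof that $A_o(F)=C(SU^q_2)$, the fundamental corepresentation there has the form $u=\begin{pmatrix}\alpha&-q\gamma^*\\ \gamma&\alpha^*\end{pmatrix}$, so $\alpha+\alpha^*+\gamma-q\gamma^*=u_{11}+u_{22}+u_{21}+u_{12}=\sum_{a,b=1}^2 u_{ab}$. Hence it suffices to show that $\sqrt{n+2}\,u_{ij}\in A_o(n)$ and $\sum_{a,b}u_{ab}\in A_o(F)$ have the same moments with respect to the respective Haar states; since the first variable is self-adjoint, its moments determine a compactly supported measure on $\mathbb R$, which is then the common law.

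Both variables have vanishing odd moments: for $\sqrt{n+2}\,u_{ij}$ by Proposition~\ref{prop-TK}(1) (equivalently the sign automorphism $u_{ij}\mapsto-u_{ij}$), and for $\sum_{a,b}u_{ab}$ because every odd product of entries of $u$ integrates to $0$ for the same reason. For the even moments, Theorem~\ref{thm-weingarten} gives
\[\int(\sqrt{n+2}\,u_{ij})^{2k}=(n+2)^k\sum_{p,q\in TL(k)}\delta_p^{I_n}(i,\ldots,i)\,\delta_q^{I_n}(j,\ldots,j)\,W_{kn}(p,q)=(n+2)^k\sum_{p,q\in TL(k)}W_{kn}(p,q),\]
because $(I_n)_{aa}=1$ makes every $\delta_p^{I_n}(i,\ldots,i)$ equal to $1$; in particular the answer is independent of $i,j$. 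On the other side, expanding the product of sums,
\[\int\Big(\sum_{a,b}u_{ab}\Big)^{2k}=\sum_{p,q\in TL(k)}\Big(\sum_{a}\delta_p^F(a)\Big)\Big(\sum_{b}\delta_q^F(b)\Big)W_{kn}(p,q),\]
and since each $p\in TL(k)$ is a pair partition of $\{1,\ldots,2k\}$, the indices belonging to distinct strings of $p$ are disjoint, so the sum over $a=(a_1,\ldots,a_{2k})$ factors over strings: $\sum_a\delta_p^F(a)=\prod_{s\in p}\big(\sum_{c,c'}F_{cc'}\big)=s_F^k$, and likewise $\sum_b\delta_q^F(b)=s_F^k$. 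Substituting and using $s_F^2=n+2$, the $A_o(F)$ moment equals $(n+2)^k\sum_{p,q\in TL(k)}W_{kn}(p,q)$, matching the previous display. Thus all moments coincide, and the two variables have the same law.

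I do not anticipate a real obstacle: once Theorem~\ref{thm-weingarten} is available this is a short calculation. The only things to watch are keeping the two roles of ``$n$'' separate --- the size of the orthogonal matrix, versus the loop parameter $N=\sum F_{ij}^2$, which equals $n$ even though $F$ is $2\times 2$ --- and tracking the constants so that the two leftover factors $s_F^k$ coming from the $F$-weights combine into exactly the $s_F^{2k}=(n+2)^k$ produced by the explicit rescaling of $u_{ij}$.
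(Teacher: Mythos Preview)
Your proof is correct and follows essentially the same approach as the paper: both compute the even moments via the Weingarten formula of Theorem~\ref{thm-weingarten}, identify the variable in (2) as $\sum_{a,b}u_{ab}$, observe that $N=-q-q^{-1}=n$ so that the same Weingarten matrix $W_{kn}$ appears on both sides, and then use $\sum_a\delta_p^F(a)=\big(\sum_{i,j}F_{ij}\big)^k=(n+2)^{k/2}$ to produce the factor $(n+2)^k$. The only cosmetic difference is that the paper leaves $u_{ij}$ unscaled and divides $w$ by $\sqrt{n+2}$ at the end, whereas you place the $(n+2)^k$ on the $A_o(n)$ side from the start.
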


\begin{proof}
We first compute the moments of the variable $u_{ij}\in A_o(n)$. The odd moments are all zero, and the even ones can be computed as follows:
\begin{align*}
\int u_{ij}^{2k}
&=\int u_{ij}\ldots u_{ij}\\
&=\sum_{pq}\delta_p^{I_n}(i\ldots i)\delta_q^{I_n}(j\ldots j)W_{kn}(p,q)\\
&=\sum_{pq}W_{kn}(p,q)
\end{align*}
We denote by $v$ the fundamental corepresentation of $C(SU^q_2)$. According to the various identifications in the previous section, the variable in (2) is given by:
\begin{align*}
w
&=\alpha+\alpha^*+\gamma-q\gamma^*\\
&=v_{11}+v_{22}+v_{12}+v_{21}\\
&=\sum_{ij}v_{ij}
\end{align*}
The moments of $w$ can be computed as well by using the Weingarten formula. The odd moments are all zero, and the even ones are given by:
\begin{align*}
\int w^{2k}
&=\int\left(\sum_{ij}v_{ij}\right)^{2k}\\
&=\sum_{ij}\int v_{i_1j_1}\ldots v_{i_{2k}j_{2k}}\\
&=\sum_{ijpq}\delta_p^F(i)\delta_q^F(j)W_{kN}(p,q)
\end{align*}
Here the use the identification $C(SU^q_2)=A_o(F)$ given by Theorem 1.5, where $F$ is the following matrix:
\[F=\begin{pmatrix}0&\sqrt{-q}\cr 1/\sqrt{-q}&0\end{pmatrix}\]
Due to the special form of this matrix, the number $N$ is given by:
\begin{align*}
N
&=\sum F_{ij}^2\\
&=-q-q^{-1}\\
&=n
\end{align*}
Also, for any diagram $p$ we have:
\begin{align*}
\sum_i\delta_{pi}
&=\left(\sum_{ij}F_{ij}\right)^k\\
&=(\sqrt{-q}+1/\sqrt{-q})^k\\
&=(-q-q^{-1}+2)^{k/2}\\
&=(n+2)^{k/2}
\end{align*}
By combining together all these observations, we get:
\[\int w^{2k}=(n+2)^k\sum_{pq}W_{kn}(p,q)\]
Thus $w/\sqrt{n+2}$ has the same moments as $u_{ij}$, which gives the result.
\end{proof}

\section{Matrix formulation}

In this section and unless the contrary is explicitly stated, we shall assume in the remainder of the
paper that $n$ is an integer satisfying $n > 2$.
We have seen in the previous section that the problem of computing the law of the standard generators $u_{ij}\in A_o(n)$ reduces to a certain calculation in the quantum group $SU^q_2$, where the deformation parameter $q\in (-1,0)$ is given by $q+q^{-1}=-n$.

In this section we further improve this result, by presenting a matrix formulation of the problem. For this purpose, we use a representation found by Woronowicz in \cite{wo1}.
We denote by $\mathbb T$ the unit circle in the complex plane.

\begin{theorem}
 For any $u\in\mathbb T$, we have a representation of $C(SU^q_2)$ given by 
\begin{align*}
\pi_u(\alpha)e_k&=\sqrt{1-q^{2k}}e_{k-1}\\
\pi_u(\gamma)e_k&=uq^k e_k
\end{align*}
where $(e_k)$ is the standard basis of $\ell^2(\mathbb N)$.
\end{theorem}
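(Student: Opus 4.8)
The plan is to verify directly that the operators $\pi_u(\alpha)$ and $\pi_u(\gamma)$ defined on $\ell^2(\mathbb N)$ satisfy the five defining relations of $C(SU^q_2)$ listed in the proof of Theorem 1.5, namely (1) $\alpha\gamma=q\gamma\alpha$, (2) $\alpha\gamma^*=q\gamma^*\alpha$, (3) $\gamma\gamma^*=\gamma^*\gamma$, (4) $\alpha^*\alpha+\gamma^*\gamma=1$, (5) $\alpha\alpha^*+q^2\gamma\gamma^*=1$. Since $C(SU^q_2)$ is by definition (via Theorem 1.5) the universal $C^*$-algebra on generators $\alpha,\gamma$ subject to these relations, any family of bounded Hilbert-space operators satisfying them extends to a $*$-representation of the algebra; so the content of the theorem is precisely this finite computation together with the observation that the proposed operators are bounded.

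First I would record the adjoints. From $\pi_u(\alpha)e_k=\sqrt{1-q^{2k}}\,e_{k-1}$ (with the convention that the coefficient is $0$ when $k=0$, so $e_{-1}$ never appears) one gets $\pi_u(\alpha^*)e_k=\sqrt{1-q^{2k+2}}\,e_{k+1}$, a weighted unilateral shift; and from $\pi_u(\gamma)e_k=uq^k e_k$ one gets $\pi_u(\gamma^*)e_k=\bar u q^k e_k$, since $q$ is real. Boundedness is immediate: $|q|<1$ gives $1-q^{2k}\in[0,1)$, so $\|\pi_u(\alpha)\|\le 1$, and $\pi_u(\gamma)$ is a diagonal operator with entries $uq^k\to 0$, hence compact of norm $|u|=1$. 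Relation (3) is trivial because both $\pi_u(\gamma)$ and $\pi_u(\gamma^*)$ are diagonal. For (4) and (5) one applies both sides to a basis vector $e_k$: the left side of (4) gives $(1-q^{2k}) + q^{2k} = 1$ and the left side of (5) gives $(1-q^{2k+2}) + q^2\cdot q^{2k} = 1$, as required. For (1) one computes $\pi_u(\alpha)\pi_u(\gamma)e_k = uq^k\sqrt{1-q^{2k}}\,e_{k-1}$ while $\pi_u(\gamma)\pi_u(\alpha)e_k = uq^{k-1}\sqrt{1-q^{2k}}\,e_{k-1}$, and these agree after multiplying the second by $q$; relation (2) is the same computation with $\bar u$ in place of $u$.

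I do not expect any serious obstacle here — the argument is a routine bounded-operator verification. The one point deserving a word of care is the boundary index $k=0$: one must adopt the convention that $\sqrt{1-q^{2\cdot 0}}=0$, so that $\pi_u(\alpha)e_0=0$ and no out-of-range vector $e_{-1}$ is invoked, and then check that relations (1), (4) still hold at $k=0$ (they do: $(1-1)+1=1$, and both sides of (1) annihilate $e_0$). A secondary remark worth including is that one should confirm the operators are actually well-defined on the dense span of the $e_k$ and extend by boundedness to all of $\ell^2(\mathbb N)$, which is immediate from the norm estimates above; alternatively one may cite that this is exactly the representation exhibited by Woronowicz in \cite{wo1}, so the statement is classical and the proof is included only for completeness.
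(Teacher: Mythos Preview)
Your proposal is correct and follows exactly the approach indicated in the paper: the paper's proof simply states that one checks the defining relations of $C(SU^q_2)$ and cites Woronowicz \cite{wo1}, and you have written out that verification in full detail. There is nothing to add or correct.
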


\begin{proof}
This follows from the fact that the operators $\pi_u(\alpha)$ and $\pi_u(\gamma)$ satisfy the defining relations for $C(SU^q_2)$. See Woronowicz \cite{wo1}.
\end{proof}

One of the remarkable features of Woronowicz's representation is that it allows the computation of the Haar measure of $SU^q_2$, via a particularly simple formula.

\begin{theorem}
\label{thm-woro-haar}
The Haar functional of $C(SU^q_2)$ is given by
\[\int a=(1-q^2)\int_{\mathbb T}tr(D\pi_u(a))\frac{du}{2\pi iu}\]
where $D$ is the diagonal operator given by $D(e_k)=q^{2k}e_k$.
\end{theorem}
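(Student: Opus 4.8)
The plan is to verify that the right-hand side defines a state on $C(SU^q_2)$ which is left- and right-invariant, and then invoke the uniqueness of the Haar functional guaranteed by Woronowicz's theory. Concretely, set $\varphi(a)=(1-q^2)\int_{\mathbb T}\mathrm{tr}(D\pi_u(a))\,\frac{du}{2\pi i u}$. First I would check that the trace $\mathrm{tr}(D\pi_u(a))$ makes sense: since $D(e_k)=q^{2k}e_k$ is trace-class (as $|q|<1$) and $\pi_u(a)$ is bounded, the product $D\pi_u(a)$ is trace-class, so the integrand is well-defined and continuous in $u$; hence $\varphi$ is a well-defined bounded linear functional. Positivity follows from writing $D=D^{1/2}D^{1/2}$ with $D^{1/2}(e_k)=|q|^k e_k$, so that $\mathrm{tr}(D\pi_u(a^*a))=\mathrm{tr}(D^{1/2}\pi_u(a)^*\pi_u(a)D^{1/2})=\|\pi_u(a)D^{1/2}\|_{HS}^2\ge 0$, and then integrating over $\mathbb T$ preserves positivity.

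Next I would pin down the normalization constant. One computes $\varphi(1)=(1-q^2)\int_{\mathbb T}\mathrm{tr}(D)\,\frac{du}{2\pi i u}=(1-q^2)\sum_{k\ge 0}q^{2k}=(1-q^2)\cdot\frac{1}{1-q^2}=1$, so $\varphi$ is a state. The factor $(1-q^2)$ is exactly what is needed here.

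The substantive step is invariance. By uniqueness of the Haar state it suffices to prove, say, left-invariance: $(\mathrm{id}\otimes\varphi)\Delta(a)=\varphi(a)1$ for all $a$, equivalently $\varphi((\omega\otimes\mathrm{id})\Delta(a))=\omega(1)\varphi(a)$ for every bounded functional $\omega$, and by density and continuity it is enough to check this on the dense $*$-subalgebra generated by $\alpha,\gamma$, i.e. on the matrix coefficients of the tensor powers of the fundamental corepresentation. Here I would use Peter--Weyl: the matrix coefficients of pairwise inequivalent irreducible corepresentations together with the coefficients of the trivial one span a dense subspace, and the Haar state is characterized as the unique state vanishing on the coefficients of all nontrivial irreducibles while sending $1\mapsto 1$. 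So the real task reduces to showing $\varphi(c)=0$ for $c$ any matrix coefficient of a nontrivial irreducible corepresentation of $SU^q_2$. Using that the irreducibles are indexed by spins $j\in\frac12\mathbb N$ with the coefficients expressible via little $q$-Jacobi polynomials evaluated at the diagonal operator $\pi_u(\gamma^*\gamma)$, this becomes the statement that a certain weighted sum $\sum_{k\ge 0}q^{2k}(\text{coefficient value at }e_k)$, integrated in the $\mathbb T$-variable $u$, vanishes. The $u$-integration kills all coefficients except those diagonal in the circle variable (the ones built purely from $\alpha,\gamma^*\gamma$); for those, vanishing follows from the known orthogonality relations of the little $q$-Jacobi polynomials against the measure with weights $q^{2k}$ on $\mathbb N$.

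The main obstacle is precisely this last point: one needs to know the explicit form of the matrix coefficients of $\pi_u$ on the higher irreducibles (Woronowicz's spherical functions on $SU^q_2$, the $q$-Jacobi/$q$-Krawtchouk picture) and the associated $q$-orthogonality relation. If one wants to stay self-contained and avoid quoting the full $q$-special-function machinery, an alternative is to verify invariance directly on generators by a recursion: show $\varphi(\alpha^a\gamma^b(\gamma^*)^c)$ and its adjoint-type monomials satisfy the moment recursions forced by left-invariance applied to $\Delta(\alpha)=\alpha\otimes\alpha-q\gamma^*\otimes\gamma$, $\Delta(\gamma)=\gamma\otimes\alpha+\alpha^*\otimes\gamma$; computing $\mathrm{tr}(D\pi_u(\cdot))$ on such monomials reduces to geometric series in $q^{2k}$ and a single $\int_{\mathbb T}u^m\frac{du}{2\pi i u}=\delta_{m,0}$, and matching the two sides is a finite check. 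Either route works; the former is cleaner conceptually, the latter more elementary. In the write-up I would present the Peter--Weyl argument and simply cite Woronowicz \cite{wo1} for the representation-theoretic input that the nontrivial matrix coefficients integrate to zero against this functional.
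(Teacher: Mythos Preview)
Your proposal is correct and follows essentially the same route as the paper: the paper's proof consists of the single sentence ``This is a result of the abstract characterization of the Haar functional, as being the unique bi-invariant state. See Woronowicz \cite{wo1}.'' You have simply unpacked this, supplying the trace-class/positivity/normalization checks and an outline of how the invariance verification goes (either via Peter--Weyl and little $q$-Jacobi orthogonality, or by direct computation on monomials), all of which is standard and correct.
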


\begin{proof}
This is a result of the abstract characterization of the Haar functional, as being the unique bi-invariant state. See Woronowicz \cite{wo1}.
\end{proof}

We are now in position of continuing the main computation. We begin with an elementary result, which slightly improves the model found in Proposition \ref{prop-equivalence}. 

\begin{proposition}
\label{prop-ABconstant}
The following variables over $SU^q_2$ have the same moments:
\begin{enumerate}
\item $\alpha+\alpha^*+\gamma-q\gamma^*$.

\item $\alpha+\alpha^*+A\gamma+B\gamma^*$, for any $A,B\in\mathbb C$ with 
$AB=-q$.
\end{enumerate}
\end{proposition}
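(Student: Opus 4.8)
The plan is to reduce the statement about arbitrary $A,B$ with $AB=-q$ to the specific case $A=1$, $B=-q$ by exhibiting an explicit rescaling of the generator $\gamma$ that is an algebra automorphism (or at least preserves the Haar functional) and sends one variable to the other. The key observation is that, since $\gamma$ and $\gamma^*$ commute and are normal, the only relations in $C(SU^q_2)$ involving $\gamma$ are the two commutation relations $\alpha\gamma=q\gamma\alpha$, $\alpha\gamma^*=q\gamma^*\alpha$ (and the two unitarity relations involving $\gamma\gamma^*$), all of which are \emph{homogeneous} of degree one in the pair $(\gamma,\gamma^*)$ when we assign weight $1$ to $\gamma$ and weight $1$ to $\gamma^*$ — wait, more precisely they are invariant under $\gamma\mapsto\lambda\gamma$, $\gamma^*\mapsto\bar\lambda\gamma^*$ for $|\lambda|=1$. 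So first I would recall the circle action: for each $\lambda\in\mathbb T$ there is a $*$-automorphism $\Phi_\lambda$ of $C(SU^q_2)$ fixing $\alpha$ and sending $\gamma\mapsto\lambda\gamma$. This is exactly the action implemented in Woronowicz's representation by the parameter $u\in\mathbb T$: comparing $\pi_u$ with $\pi_1$ one sees $\pi_u=\pi_1\circ\Phi_u$, since $\pi_u(\gamma)=u\,\pi_1(\gamma)$ and $\pi_u(\alpha)=\pi_1(\alpha)$.

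Next I would use the fact that the Haar state is invariant under $\Phi_\lambda$. This is immediate from Theorem~\ref{thm-woro-haar}: the formula $\int a=(1-q^2)\int_{\mathbb T}\mathrm{tr}(D\pi_u(a))\frac{du}{2\pi iu}$ already averages over the circle, and replacing $a$ by $\Phi_\lambda(a)$ just reparametrizes $u\mapsto\lambda u$ in the integral, which leaves it unchanged by rotation invariance of $\frac{du}{2\pi iu}$ on $\mathbb T$. Hence $\int\Phi_\lambda(a)=\int a$ for every $a$, so $\Phi_\lambda$ preserves all moments of any element.

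Given this, apply $\Phi_\lambda$ to the variable in (1): $\Phi_\lambda(\alpha+\alpha^*+\gamma-q\gamma^*)=\alpha+\alpha^*+\lambda\gamma-q\bar\lambda\gamma^*$. For this to equal the variable in (2) I need $A=\lambda$ and $B=-q\bar\lambda$ for some $\lambda\in\mathbb T$; this forces $AB=-q|\lambda|^2=-q$ and $|A|=1$. So the argument as stated handles exactly the case $|A|=|B|=1$. To cover all $A,B\in\mathbb C$ with $AB=-q$ (allowing $|A|\ne1$), I would supplement the circle automorphism with a non-$*$-preserving rescaling $\gamma\mapsto t\gamma$, $\gamma^*\mapsto t^{-1}\gamma^*$ for $t>0$, which still respects the defining relations (they are bi-homogeneous as noted), and whose effect on the Haar functional must be checked: in Woronowicz's representation this corresponds to conjugating $\pi_u(\gamma)=uq^k$ by the unbounded diagonal operator $e_k\mapsto t^{-k}e_k$, i.e.\ replacing $\pi_u$ by a similar (non-unitarily equivalent) representation, and one must verify that the trace-against-$D$ formula is unchanged. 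The cleanest route is probably to argue purely combinatorially via the Weingarten formula of Theorem~\ref{thm-weingarten}: a word in $\alpha,\alpha^*,A\gamma,B\gamma^*$ has a nonzero Haar integral only when the $\gamma$'s and $\gamma^*$'s pair up (each block of the relevant noncrossing partition contributing one $\gamma$ and one $\gamma^*$), so the prefactors enter only through the combination $AB=-q$, which is fixed.

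The main obstacle I anticipate is precisely this last point — justifying that the $t>0$ rescaling (or equivalently the $|A|\ne1$ case) does not change the Haar state. The $|\lambda|=1$ case is essentially free from the structure of Woronowicz's formula, but handling general complex $A,B$ requires either the block-pairing argument above (which needs the explicit form of $\delta_p^F$ for the $2\times 2$ matrix $F$ to confirm that each Temperley--Lieb string forces one $\gamma$ and one $\gamma^*$), or a direct check that Theorem~\ref{thm-woro-haar} is insensitive to the similarity transformation. I would write up the $|\lambda|=1$ part via $\Phi_\lambda$ and dispatch the scaling part by the Weingarten computation, since that keeps everything within tools already developed in Sections~2--3.
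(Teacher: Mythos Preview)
Your plan is workable and lands on both ingredients the paper invokes, but it is more roundabout than needed, and one detail in your Weingarten step is not quite right.

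The paper's slickest argument is a one-liner you almost have. Instead of applying the circle action to the \emph{variable} (which, as you observe, only reaches the case $|A|=1$), look at what the $u$-average in Theorem~\ref{thm-woro-haar} does to each \emph{monomial} in the expansion of $(\alpha+\alpha^*+A\gamma+B\gamma^*)^{k}$. In $\pi_u$ the operators $\gamma,\gamma^*$ are diagonal with global factors $u,u^{-1}$ respectively, so a monomial with $m$ copies of $\gamma$ and $m'$ copies of $\gamma^*$ contributes $u^{m-m'}$ times something $u$-independent to $\mathrm{tr}(D\pi_u(\,\cdot\,))$; the integral $\int_{\mathbb T}\frac{du}{2\pi iu}$ kills it unless $m=m'$. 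The surviving monomials then carry the scalar $A^{m}B^{m}=(-q)^m$, independent of the individual choice of $A,B$. This settles all $A,B$ with $AB=-q$ in one stroke; your non-$*$ rescaling $\gamma\mapsto t\gamma$, $\gamma^*\mapsto t^{-1}\gamma^*$ preserves the Haar state for exactly this same reason, so treating it as a separate step is redundant.

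On the Weingarten route, your parenthetical that ``each block of the relevant noncrossing partition contributes one $\gamma$ and one $\gamma^*$'' is false as stated: for instance in $\gamma\,\alpha\,\alpha^*\,\gamma^*$ the pairing $\cap\cap$ contributes (here $i=(2,1,2,1)$, $j=(1,1,2,2)$, and $F_{21}F_{21}\neq 0$), joining $\gamma$ to $\alpha$ and $\alpha^*$ to $\gamma^*$. What the antidiagonal $F$ \emph{does} force, via $\delta_p^F(i)\neq 0$ and $\delta_q^F(j)\neq 0$, is that exactly $k$ of the $2k$ row indices equal $1$ and $k$ equal $2$, and likewise for the column indices; a two-line count then yields $\#\gamma=\#\gamma^*$ (and $\#\alpha=\#\alpha^*$) \emph{globally}, which is all that is needed. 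With that fix your Weingarten argument goes through and coincides with the paper's first stated justification.
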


\begin{proof}
It is enough to check that the moments of the variable in (2) don't depend on the choice of $A,B$. But this follows from Theorem \ref{thm-weingarten}.
Alternatively, we can view it as a consequence of
 Theorem \ref{thm-woro-haar}, because when one expands
$(\alpha+\alpha^*+A\gamma+B\gamma^*)^k$, monomials which do not have the same degree in
$\gamma$ and $\gamma^*$ are off-diagonal and do not contribute to the Haar measure. 
Therefore the coefficient in front of any contributing monomial is a power of $AB$. This concludes the proof.
\end{proof}

Observe that when $A$ and $B$ are conjugate, the variable in (2) is self-adjoint, which makes it a natural candidate for replacing the variable in (1) from Proposition \ref{prop-equivalence}. 

We know from Theorem \ref{thm-woro-haar}
how to compute laws of variables over $SU^q_2$. Together with a suitable change of basis, this leads to the following result.

\begin{proposition}
\label{prop-law-w}
The law of $w=\alpha+\alpha^*+A\gamma+B\gamma^*$ is given by
\[\int\phi(w)=(1-q^2)\int_{\mathbb T}tr(D\phi(M))\frac{du}{2\pi iu}\]
where $D(e_k)=q^{2k}e_k$ and $M(e_k)=e_{k+1}+q^k(Au+Bu^{-1})e_k+(1-q^{2k})e_{k-1}$.
\end{proposition}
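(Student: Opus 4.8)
The plan is to substitute $a=\phi(w)$ into the Haar formula of Theorem~\ref{thm-woro-haar} and then to remove the square roots appearing in Woronowicz's representation by a bounded diagonal change of basis. Since $\pi_u$ is a $*$-representation it intertwines (polynomial, and hence by density continuous) functional calculus, so $\pi_u(\phi(w))=\phi(\pi_u(w))$, and Theorem~\ref{thm-woro-haar} gives
\[\int\phi(w)=(1-q^2)\int_{\mathbb T}tr\big(D\,\phi(\pi_u(w))\big)\,\frac{du}{2\pi iu}.\]
First I would compute $\pi_u(w)$ explicitly: from $\pi_u(\alpha)e_k=\sqrt{1-q^{2k}}\,e_{k-1}$ one gets $\pi_u(\alpha)^*e_k=\sqrt{1-q^{2(k+1)}}\,e_{k+1}$, while $\pi_u(\gamma)e_k=uq^ke_k$ and $\pi_u(\gamma)^*e_k=u^{-1}q^ke_k$ on $\mathbb T$; hence
\[\pi_u(w)e_k=\sqrt{1-q^{2(k+1)}}\,e_{k+1}+q^k(Au+Bu^{-1})e_k+\sqrt{1-q^{2k}}\,e_{k-1},\]
which is tridiagonal but not yet of the stated form $M$.

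The key step is the change of basis. Let $\Delta$ be the diagonal operator $\Delta e_k=d_k e_k$ with $d_0=1$ and $d_k=\prod_{j=1}^k\sqrt{1-q^{2j}}$. Since $|q|<1$ (this is where $n>2$ enters), the infinite product $\prod_{j\ge1}(1-q^{2j})$ converges to a strictly positive limit, so $\Delta$ is a positive bounded operator with bounded inverse. A one-line computation of $\Delta^{-1}\pi_u(w)\Delta$ on $e_k$ shows that the superdiagonal entry becomes $(d_k/d_{k+1})\sqrt{1-q^{2(k+1)}}=1$, the subdiagonal entry becomes $(d_k/d_{k-1})\sqrt{1-q^{2k}}=1-q^{2k}$, and the diagonal entry is unchanged; that is, $\Delta^{-1}\pi_u(w)\Delta=M$.

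It then remains to pass $\Delta$ through the trace. We have $\phi(M)=\Delta^{-1}\phi(\pi_u(w))\Delta$, so $tr(D\phi(M))=tr(D\Delta^{-1}\phi(\pi_u(w))\Delta)$; since $D$ and $\Delta$ are both diagonal they commute, and since $D$ is trace class ($\sum_k q^{2k}<\infty$) while $\Delta^{\pm1}$ and $\phi(\pi_u(w))$ are bounded, cyclicity of the trace yields $tr(D\phi(M))=tr(\Delta^{-1}D\phi(\pi_u(w))\Delta)=tr(D\phi(\pi_u(w)))$. Combining this with the displayed Haar formula gives the proposition. The work is essentially bookkeeping rather than conceptual; the points requiring care are that $\Delta$ be genuinely bounded with bounded inverse, which is exactly the convergence of $\prod(1-q^{2j})$ and uses $q\in(-1,0)$, and that the trace manipulations be legitimate, which is guaranteed by $D$ being trace class. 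If one wishes to allow continuous $\phi$ rather than just polynomials, one should also note that $\pi_u(w)$ is normal --- indeed self-adjoint when $B=\bar A$, the case of real interest since then $w$ itself is self-adjoint.
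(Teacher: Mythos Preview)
Your proof is correct and follows essentially the same route as the paper: compute $\pi_u(w)$ from Woronowicz's representation, then conjugate by the diagonal operator with entries $\prod_{j=1}^k\sqrt{1-q^{2j}}$ (your $\Delta$ is exactly the paper's $J$) to turn the tridiagonal matrix with square-root entries into the stated $M$, while $D$, being diagonal, is unchanged. You add welcome analytical care the paper leaves implicit---boundedness of $\Delta^{\pm1}$ via convergence of $\prod(1-q^{2j})$ for $|q|<1$, $D$ being trace class, and cyclicity of the trace---but the underlying idea is identical.
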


\begin{proof}
We first use the notations from the beginning of this section. 
The images of $\alpha^*,\gamma^*$ by the Woronowicz representation are given by:
\begin{align*}
\pi_u(\alpha^*)e_k&=\sqrt{1-q^{2k+2}}e_{k+1}\\
\pi_u(\gamma^*)e_k&=u^{-1}q^k e_k
\end{align*}
Together with the formulae in Theorem 3.1, this shows that the image $M=\pi_u(w)$ of the variable in the statement is given by:
\[
M(e_k)=\sqrt{1-q^{2k+2}}e_{k+1}+q^k(Au+Bu^{-1})e_k+\sqrt{1-q^{2k}}e_{k-1}
\]
On the other hand, by using Theorem \ref{thm-woro-haar},
we have:
\[\int\phi(w)=(1-q^2)\int_{\mathbb T}tr(D\phi(M))\frac{du}{2\pi iu}\]
We now perform a change of basis. Consider another copy of $\ell^2(\mathbb N)$, with orthonormal basis denoted $\{e_n'\}$, and define a linear map $J:\ell^2(\mathbb N)\to \ell^2(\mathbb N)$ as follows:
\[J(e_k')=\sqrt{1-q^2}\sqrt{1-q^4}\ldots\sqrt{1-q^{2k}}e_k\]
In terms of $D'=J^{-1}DJ$ and $M'=J^{-1}MJ$, the above formula becomes:
\[\int\phi(w)=(1-q^2)\int_{\mathbb T}tr(D'\phi(M'))\frac{du}{2\pi iu}\]
Since $J$ is diagonal, we have $D'(e_k')=q^{2k}e_k'$. As for $M'$, it is given by:
\begin{align*}
M'(e_k)
&=J^{-1}MJ(e_k')\\
&=\sqrt{1-q^2}\sqrt{1-q^4}\ldots\sqrt{1-q^{2k}}J^{-1}M(e_k)\\
&=e_{k+1}'+q^k(Au+Bu^{-1})e_k'+(1-q^{2k})e_{k-1}'
\end{align*}
In other words, when removing all the prime signs for the above integration formula, we obtain the formula in the statement.
\end{proof}

\section{Orthogonal polynomials}

In this section we state and prove the main result in this paper. We will compute the density of the variable in Proposition \ref{prop-law-w}, 
by using orthogonal polynomials, and a method inspired from the paper of Koeling and Verding \cite{kve}.
In order to state the main result, we introduce the following notation.

\begin{definition}
A variable $w$ is said to have circular density $F$ if
\[\int\phi(w)=\int_{\mathbb T}F(z)\phi(z+z^{-1})\frac{dz}{2\pi iz}\]
for any continuous function $\phi$, and $F(z)=F(z^{-1})$ for all $z$.
\end{definition}

With the change of variables $z=e^{it}$, this formula becomes:
\[\int\phi(w)=\frac{1}{\pi}\int_0^{\pi}F(e^{it})\phi(2\cos t)\,dt\]

In particular, if $w$ is self-adjoint, its spectrum must be contained in $[-2,2]$. 

\begin{theorem}
\label{thm-main-computation}
The circular density of $w=\sqrt{n+2}\,u_{ij}$ is given by
\begin{align*}
F(z)&=\frac{1}{2}(G(z)+G(z^{-1}))\\
G(z)&=(1-q^2)(1-z^4)\sum_{k=0}^\infty
\frac{q^{2k}(1-q^{2k+1}z^2)}
{(1+q^{2k}z^2)(1+q^{2k+1}z^2)(1+q^{2k+2}z^2)}
\end{align*}
where $q\in(-1,0)$ is such that $q+q^{-1}=-n$, with $n > 2$.
\end{theorem}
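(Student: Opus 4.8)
The plan is to start from the integral formula in Proposition \ref{prop-law-w}, specialized with $A,B$ chosen so that $A u + B u^{-1} = 2\cos\theta$ for a suitable reparametrization. Concretely, taking $A = \sqrt{-q}\,$ and $B = 1/\sqrt{-q}$ (which satisfy $AB = -q$, as Proposition \ref{prop-ABconstant} allows), write $u = e^{i\theta}$ and set $x = Au + Bu^{-1}$; then $M'(e_k) = e_{k+1}' + q^k x\, e_k' + (1-q^{2k})e_{k-1}'$ is a tridiagonal Jacobi-type operator. The key observation is that $\mathrm{tr}(D'\,\phi(M'))$ can be read off from the spectral theory of this Jacobi matrix with respect to the cyclic vector $e_0'$: the diagonal matrix element $\langle e_0', \phi(M') e_0'\rangle$ is $\int \phi(t)\,d\mu_x(t)$ where $\mu_x$ is the orthogonality measure of the monic polynomials $P_k$ defined by the three-term recurrence $t P_k = P_{k+1} + q^k x\, P_k + (1-q^{2k}) P_{k-1}$. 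More generally $\langle e_k', \phi(M')e_k'\rangle$ is a weighted average against the same family, so that $\mathrm{tr}(D'\phi(M')) = \sum_k q^{2k}\langle e_k',\phi(M')e_k'\rangle$ is a single integral $\int \phi(t)\, d\nu_x(t)$ for an explicit measure $\nu_x$.

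The central computational step is thus to identify the recurrence $t P_k = P_{k+1} + q^k x\, P_k + (1-q^{2k}) P_{k-1}$ with a known family of $q$-orthogonal polynomials. I expect these to be (a special/limiting case of) the Al-Salam--Chihara or continuous big $q$-Hermite polynomials, whose weight functions are classically known. Writing $t = y + y^{-1}$ on the circle, one passes to the standard basic-hypergeometric normalization; the parameter $x$ enters as (a combination of) the Al-Salam--Chihara parameters. Once the orthogonality measure $d\mu_x(y+y^{-1})$ is written down — a weight of the form $w(y)\,d\theta$ with $w$ an explicit product of factors $(y^2; \text{stuff})_\infty$ type, or after the $q<0$ specialization a ratio of theta-like infinite products — one then has to carry out the $k$-sum weighted by $q^{2k}$ (this shifts the parameters) and finally the outer $\theta$-integral over $\mathbb{T}$ coming from $u = e^{i\theta}$. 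Doing the $u$-integral amounts to an application of the orthogonality of the $\gamma$-eigenbasis, or equivalently a residue computation, and should collapse the double integral into the claimed single series in $z$.

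After these reductions, the output is a density $F$ on the $w$-circle (the variable $z$ with $w = z + z^{-1}$), obtained as a sum over $k\ge 0$ of rational functions of $z^2$ with poles at $z^2 = -q^{-2k}, -q^{-2k-1}, -q^{-2k-2}$; symmetrizing under $z \mapsto z^{-1}$ (as the definition of circular density demands, and as is forced by $F(z) = F(z^{-1})$) produces the stated $F(z) = \tfrac12(G(z) + G(z^{-1}))$. The prefactor $(1-q^2)$ is inherited directly from Theorem \ref{thm-woro-haar}, and the factor $(1-z^4)$ should emerge from the Jacobian of $t = y+y^{-1}$ together with the endpoint factors of the $q$-Hermite weight. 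It remains to check the overall normalization $\int_{\mathbb T} F(z)\,\frac{dz}{2\pi i z} = 1$, which follows from $\int 1\, = 1$ for the Haar state, i.e. from the $k=0$, $\phi=1$ instance of the whole computation.

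The main obstacle I anticipate is the identification and manipulation of the orthogonal polynomial family at $q<0$: the classical weight formulas for Al-Salam--Chihara / $q$-Hermite polynomials are stated for $0<q<1$, and transcribing them to $q\in(-1,0)$ (where the ``measure'' becomes, after pulling back to the circle, a signed/complex-looking density expressed through the alternating theta-type series $F(z) = \sum (-1)^r \cdots z^{2r}$ of the introduction) requires care — one must track analytic continuation of the infinite products and make sure the $q^{2k}$-weighted sum over $k$ and the $u$-integral can be interchanged and evaluated term by term. A secondary technical point is justifying that the formally tridiagonal operator $M'$ is essentially self-adjoint with $e_0'$ (and each $e_k'$) in the domain of $\phi(M')$ for continuous $\phi$, so that the spectral-measure interpretation above is legitimate; since the recurrence coefficients $q^k x$ and $1-q^{2k}$ are bounded, this is routine but should be remarked.
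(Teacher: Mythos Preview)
Your overall architecture is exactly the paper's: identify the tridiagonal operator $M'$ with the Jacobi matrix of the Al-Salam--Chihara family (parameters $a=Au$, $b=Bu^{-1}$, $ab=-q$), express $\mathrm{tr}(D'\phi(M'))$ as an integral against the corresponding orthogonality (Askey--Wilson) measure, perform the $q^{2k}$-weighted sum over $k$, and then do the $u$-integral by residues. Two concrete points, however, separate your plan from a working proof.

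First, your choice $A=\sqrt{-q}$, $B=1/\sqrt{-q}$ gives $|b|=|B|=1/\sqrt{-q}>1$. In that regime the Askey--Wilson measure for Al-Salam--Chihara polynomials acquires discrete mass points, the standard continuous weight formula you intend to quote is no longer the whole story, and the later residue calculus over $u$ becomes delicate. The paper instead takes $A,B$ complex conjugate with $|A|=|B|=\sqrt{-q}<1$, which keeps $|a|=|b|<1$ for all $u\in\mathbb T$ and places the computation squarely in the purely continuous case.

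Second, and more seriously, the sentence ``carry out the $k$-sum weighted by $q^{2k}$ (this shifts the parameters)'' hides the only genuinely hard step. What that sum produces is the diagonal Poisson kernel
\[
P_{q^2}(x,x)=\sum_{k\ge 0}\frac{Q_k(x)^2}{(q,ab;q)_k}\,q^{2k},
\]
and it does \emph{not} reduce to a mere parameter shift of the weight. The paper evaluates it via the Askey--Rahman--Suslov closed form, which expresses $P_t(x,y)$ as an explicit prefactor times a very well-poised ${}_8W_7$; specializing to $t=q^2$, $ab=-q$, $y=x$, the ${}_8\varphi_7$ collapses term-by-term to the single sum over $k$ you see in the statement, and it is precisely this collapse, multiplied by the Askey--Wilson density $\beta$, that manufactures the factors $(1-z^4)$ and $(1+q^{2k}z^2)^{-1}(1+q^{2k+1}z^2)^{-1}(1+q^{2k+2}z^2)^{-1}$ after the residue integration in $u$. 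Without a concrete tool for $P_{q^2}(x,x)$ your plan stalls at exactly this point; everything downstream (the residue step, the symmetrization $F=\tfrac12(G(z)+G(z^{-1}))$, and the normalization check) is as you describe.
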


\begin{proof}
According to the various abstract results in Proposition \ref{prop-equivalence},
Proposition 3.3 and Proposition \ref{prop-law-w},
the law of $w$ is given by the formula:
\[\int\phi(w)=(1-q^2)\int_{\mathbb T}tr(D\phi(M))\frac{du}{2\pi iu}\]
Here, $D$ is the diagonal operator on $\ell^2(\mathbb N)$ given by $D(e_k)=q^{2k}e_k$, and $M$ is the following tridiagonal operator:
\[
M(e_k)=e_{k+1}+q^k(Au+Bu^{-1})e_k+(1-q^{2k})e_{k-1}
\]
According to Proposition \ref{prop-ABconstant}, we are free to choose the constants $A$ and $B$ as long as $AB=-q$; here we assume them to be complex conjugate of each other, and therefore of modulus less than $1$. 
We compute the above integral by using a method inspired from the paper of Koelink and Verding \cite{kve}. The idea will be to express $tr(D\phi(M))$ as an integral, depending on $u$. The integrand, after being averaged over $u$, will simplify.

{\bf Step 1.} Let us first recall some basic definitions and notations from the theory of $q$-hypergeometric series, to be heavily used in this proof. These series were introduced for a number of reasons, among others in order to deal with tridiagonal operators. 

First, the $q$-shifted factorial is given by:
\[(a;q)_k=(1-a)(1-qa)\ldots(1-q^{k-1}a)\]
Its multivariable version is given by:
\[(a_1,\ldots,a_r;q)_k=(a_1;q)_k\ldots(a_r;q)_k\]
The $q$-hypergeometric series is given by:
\[{}_r\varphi_s\left(\begin{matrix}{a_1,\ldots,a_r}\cr {b_1,\ldots,b_s}\end{matrix}\,;q,z\right)
=\sum_{k=0}^\infty\left( (-1)^k q^{k(k-1)/2}\right)^{s-r+1}\frac{(a_1,\ldots,a_r;q)_k}{(q,b_1,\ldots,b_s;q)_k}\,z^k\]
The Al-Salam--Chihara polynomials, depending on two parameters $a,b$, are defined by the following formula:
\[
Q_k(x)=\frac{(ab;q)_k}{a^k}\,{}_3\varphi_2\left(\begin{matrix}q^{-k},az,az^{-1}\cr ab,0\end{matrix}\,;q,q\right)\]
Here we use the following convenient parameterization: $2x=z+z^{-1}$.
These polynomials are known to satisfy the following recurrence relation:
\[
2xQ_k(x)=Q_{k+1}(x)+q^k(a+b)Q_k(x)+(1-q^k)(1-abq^{k-1})Q_{k-1}(x)
\]
We refer to the book \cite{awi} for a complete discussion of this material.
We begin now the computation. We specify the two parameters $a$ and $b$ to be:
\begin{align*}
a&=Au\\
b&=Bu^{-1}
\end{align*}
In terms of $a,b$, the tridiagonal formula of $M$ becomes: 
\[
M(e_k)=e_{k+1}+q^k(a+b)e_k+(1-q^{2k})e_{k-1}
\]
Now recall that we have $AB=-q$, so $ab=-q$. Thus the recurrence formula for the corresponding Al-Salam--Chihara polynomials is:
\[
2xQ_k(x)=Q_{k+1}(x)+q^k(a+b)Q_k(x)+(1-q^{2k})Q_{k-1}(x)
\]
We deduce that the collection $(Q_k(x))$ of Al-Salam--Chihara polynomials evaluated at $x$ play the role of eigenvectors for the operator $M$ with corresponding eigenvalue $2x$.

The Al-Salam--Chihara polynomials are orthogonal with respect to
the Askey--Wilson measure $\mu$ with two complex conjugate parameters $a$, $b$ (and the other two set to zero), 
which in the case $|a|=|b|<1$ is a continuous measure $d\mu(x)=\beta(x)dx/\sqrt{1-x^2}$ with
\[\beta(x)=\frac{(q,ab;q)_\infty}{2\pi}\cdot 
\frac{(z^2;z^{-2};q)_\infty}{(a z,az^{-1},bz,bz^{-1};q)_\infty}\]
where once again $2x=z+z^{-1}$.

We also introduce $P_t$, which is the Poisson kernel for the Al-Salam--Chihara polynomials, i.e.\ the kernel of the operator
of multiplication by $t$ in the basis of the $e_n$:
\[P_t(x,y)=\sum_{k=0}^\infty\frac{Q_k(x)Q_k(y)}{(q,ab;q)_k}t^k\]
where the denominator is the squared norm of $Q_k$ w.r.t.\ $\mu$.

It is now a consequence of the general theory, cf. \cite{ars}, that  computing $tr(D\phi(M))$ amounts to integrating $\phi$ multiplied by the diagonal of the Poisson kernel with parameter $t=q^2$, with respect to the Askey--Wilson measure. In other words, we have:
\[tr(D\phi(M))=\int\phi(2x)P_{q^{2}}(x,x)d\mu (x)\]

{\bf Step 2.} In order to compute $tr(D\phi(M))$ by using the integral found in Step 1, the first task is to find a tractable expression for the integrand.
For this purpose, we use a remarkable formula from \cite{ars}. This formula states that we have a decomposition $P_t(x,y)=\alpha\varphi$, where:
\begin{align*}
\alpha&=\frac{(atz,atz^{-1},btw,btw^{-1},t;q)_\infty}{(tzw,tzw^{-1},tz^{-1}w,tz^{-1}w^{-1},abt;q)_\infty}\\
\varphi&={}_8 W_7\left(\frac{abt}{q};t,bz,bz^{-1},a w,a w^{-1};q,t\right)
\end{align*}
with the notations 
$${}_8 W_7(\alpha;\beta_1,\ldots,\beta_5;q,z)={}_8 \varphi_7 
\left(\begin{matrix}
\alpha,q\sqrt{\alpha},-q\sqrt{\alpha},\beta_1,\ldots,\beta_5
\cr
\sqrt{\alpha},-\sqrt{\alpha},q\alpha/\beta_1,\ldots,q\alpha/\beta_5\end{matrix}; q,z\right)$$
as well as $2x=z+z^{-1}$ and $2y=w+w^{-1}$.

In particular, with $t=q^2$, $ab=-q$ (coming from the formula $AB=-q$),
so that $\sqrt{\alpha}=i q$, 
and $2x=2y=z+z^{-1}$, 
we have $P_{q^2}(x,x)=\alpha\varphi$, where:
\begin{align*}
\alpha&=\frac{(q^2az,q^2az^{-1},q^2bz,q^2bz^{-1},q^2;q)_\infty}{(q^2z^2,q^2,q^2,q^2z^{-2},-q^3;q)_\infty}\\
\varphi&={}_8\varphi_7\left(\begin{matrix}
-q^2,-iq^2,iq^2,q^2,bz,bz^{-1},az,az^{-1}\cr
-iq,iq,-q,-q^3b^{-1}z^{-1},-q^3b^{-1}z,
-q^3a^{-1}z^{-1},-q^3a^{-1}z\end{matrix}\,;q,q^2\right)
\end{align*}
Summarizing, our integrand is given by $P_{q^2}(x,x)d\mu(x)=\alpha\varphi\beta dx/\sqrt{1-x^2}$, where $\alpha,\varphi$ are the above quantities, and $\beta$ is the density of the Askey-Wilson measure.

Let us first compute $\alpha$. We have:
\begin{align*}
\alpha
&=\frac{(q^2az,q^2az^{-1},q^2bz,q^2bz^{-1},q^2;q)_\infty}{(q^2z^2,q^2,q^2,q^2z^{-2},-q^3;q)_\infty}\\
&=\frac{(q^2;q)_\infty}{(q^2,q^2,-q^3;q)_\infty}\cdot\frac{(q^2az,q^2az^{-1},q^2bz,q^2bz^{-1};q)_\infty}{(q^2z^2,q^2z^{-2};q)_\infty}\\
&=\frac{1}{(q^2,-q^3;q)_\infty}\cdot\frac{(az,az^{-1},bz,bz^{-1};q)_\infty}{(az,az^{-1},bz,bz^{-1};q)_2}\cdot\frac{(z^2,z^{-2};q)_2}{(z^2,z^{-2};q)_\infty}\\
&=\frac{1-q^4}{(q^2;q^2)_\infty}\cdot\frac{(z^2,z^{-2};q)_2}{(az,az^{-1},bz,bz^{-1};q)_2}\cdot\frac{(az,az^{-1},bz,bz^{-1};q)_\infty}{(z^2,z^{-2};q)_\infty}
\end{align*}
By developing $\varphi$ according to the definition given in Step 2, and by using the formula $ab=-q$, we get:
\begin{align*}
\varphi
&=\sum_{k=0}^\infty
\frac{(-q^2,-iq^2,iq^2,q^2,bz,bz^{-1},az,az^{-1};q)_k}
{(q,-iq,iq,-q,-q^3b^{-1}z^{-1},-q^3b^{-1}z,
-q^3a^{-1}z^{-1},-q^3a^{-1}z;q)_k}\,q^{2k}\\
&=\sum_{k=0}^\infty
\frac{(-q^2,-iq^2,iq^2,q^2,bz,bz^{-1},az,az^{-1};q)_k}
{(q,-iq,iq,-q,q^2az^{-1},q^2az,q^2bz^{-1},q^2bz;q)_k}\,q^{2k}\\
&=\sum_{k=0}^\infty
\frac{(q^2,-q^2,iq^2,-iq^2;q)_k}{(q,-q,iq,-iq;q)_k}
\cdot\frac{(az,az^{-1},bz,bz^{-1};q)_k}{(q^2az,q^2az^{-1},q^2bz,q^2bz^{-1};q)_k}\,q^{2k}\\
&=\sum_{k=0}^\infty\frac{(q^8;q^4)_k}{(q^4;q^4)_k}\cdot\frac{(az,az^{-1},bz,bz^{-1};q)_2}{(q^kaz,q^kaz^{-1},q^kbz,q^kbz^{-1};q)_2}\,q^{2k}\\
&=\sum_{k=0}^\infty\frac{1-q^{4(k+1)}}{1-q^4}\cdot\frac{(az,az^{-1},bz,bz^{-1};q)_2}{(q^kaz,q^kaz^{-1},q^kbz,q^kbz^{-1};q)_2}\,q^{2k}
\end{align*}
Combining the expressions of $\alpha$, $\beta$ and $\varphi$, we find:
\[
\alpha\varphi\beta
=(z^2,z^{-2};q)_2\sum_{k=0}^\infty\frac{q^{2k}(1-q^{4(k+1)})}{(q^kaz,q^kaz^{-1},q^kbz,q^kbz^{-1};q)_2}
\]

{\bf Step 3.} We are now in a position to integrate. With $2x=z+z^{-1}$ we have:
\begin{align*}
tr(D\phi(M))
&=\frac{1}{2}\int_{\mathbb T}\phi(z+1/z)\alpha(z)\varphi(z)\beta(z)\frac{dz}{iz}
\end{align*}
This shows that $w$ has a circular density, given by:
\[
F(z)=\frac{1}{2}(1-q^2)(z^2,z^{-2};q)_2\sum_{k=0}^\infty q^{2k}(1-q^{4(k+1)})\int_{\mathbb T}\frac{du}{2\pi i u(q^kaz,q^kaz^{-1},q^kbz,q^kbz^{-1};q)_2}\,\]
Replacing $a=Au$ and $b=B u^{-1}$,
we compute the integral over $u$ as a sum of its four residues say inside the unit circle:
$q^k B z^{\pm 1}$, $q^{k+1} B z^{\pm 1}$.
After a tedious calculation, and using $AB=-q$, we find:
\begin{align*}
F(z)&=\frac{1}{2}(G(z)+G(z^{-1}))\\
G(z)&=(1-q^2)(1-z^4)\sum_{k=0}^\infty
\frac{q^{2k}(1-q^{2k+1}z^2)}
{(1+q^{2k}z^2)(1+q^{2k+1}z^2)(1+q^{2k+2}z^2)}
\end{align*}

This gives the formula in the statement.
\end{proof}

\section{Some consequences}

The formula of the circular density $F$ found in the previous section might look quite complicated. Here are two reformulations of Theorem
\ref{thm-main-computation}.

\begin{theorem}
\label{thm-circular-density}
The circular density of $w=\sqrt{n+2}\,u_{ij}$ is given by $F(z)=\frac{1}{2}(G(z)+G(z^{-1}))$ with
\begin{align*}
G(z)
&=\frac{1+q}{1-q}\left( 1-z^2+2(1-z^4)\sum_{k=1}^\infty (-1)^k\frac{q^k}{1+q^k z^2}\right)\\
&=1+2\sum_{r=1}^\infty(-1)^r\frac{q^{r-1}(1+q)^2}{(1+q^{r+1})(1+q^{r-1})}z^{2r}
\end{align*}
where $q\in(-1,0)$ is such that $q+q^{-1}=-n$.
\end{theorem}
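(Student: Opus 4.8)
The plan is to manipulate only the function $G$, since the identity $F(z)=\frac{1}{2}(G(z)+G(z^{-1}))$ is already part of Theorem \ref{thm-main-computation}; the two displayed formulas are just two rewritings of the series for $G$. The heart of the argument is a partial fraction decomposition of the summand. Writing $y=z^2$ and fixing $k$, one checks by computing the three residues (at $y=-q^{-2k}$, $y=-q^{-2k-1}$, $y=-q^{-2k-2}$) that
\[
\frac{q^{2k}(1-q^{2k+1}y)}{(1+q^{2k}y)(1+q^{2k+1}y)(1+q^{2k+2}y)}
=\frac{1}{(1-q)^2}\left(\frac{q^{2k}}{1+q^{2k}y}-\frac{2q^{2k+1}}{1+q^{2k+1}y}+\frac{q^{2k+2}}{1+q^{2k+2}y}\right),
\]
where one uses the simplification $\frac{1+q}{(1-q)(1-q^2)}=\frac{1}{(1-q)^2}$.

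Next I would set $f(m)=q^m/(1+q^my)$ and note that the right-hand side above equals $\frac{1}{(1-q)^2}\bigl(f(2k)-2f(2k+1)+f(2k+2)\bigr)$. Writing $h(m)=(-1)^mf(m)$, this is $\frac{1}{(1-q)^2}\bigl(h(2k)+2h(2k+1)+h(2k+2)\bigr)$, and summing over $k\ge 0$ (the series converges absolutely since $|q|<1$) the even/odd terms regroup to give $\frac{1}{(1-q)^2}\bigl(2S-h(0)\bigr)$ with $S=\sum_{m\ge 0}(-1)^mq^m/(1+q^my)$ and $h(0)=1/(1+y)$. Multiplying by the prefactor $(1-q^2)(1-y^2)$ from Theorem \ref{thm-main-computation}, using $1-q^2=(1-q)(1+q)$ and $(1-y^2)/(1+y)=1-y$, and separating off the $m=0$ term of $S$, one arrives at
\[
G(z)=\frac{1+q}{1-q}\left(1-y+2(1-y^2)\sum_{m\ge 1}(-1)^m\frac{q^m}{1+q^my}\right),\qquad y=z^2,
\]
which is the first reformulation (with the summation index renamed $k$).

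For the second reformulation I would expand each $1/(1+q^mz^2)$ as $\sum_{j\ge 0}(-1)^jq^{mj}z^{2j}$, interchange the two sums (legitimate, again, because $|q|<1$), and evaluate the inner geometric series $\sum_{m\ge 1}(-q^{j+1})^m=-q^{j+1}/(1+q^{j+1})$, which gives $\sum_{m\ge 1}(-1)^mq^m/(1+q^mz^2)=-\sum_{j\ge 0}(-1)^j\frac{q^{j+1}}{1+q^{j+1}}z^{2j}$. Substituting into the boxed formula, distributing $2(1-z^4)$, and reindexing the two resulting power series so that both are written in powers $z^{2r}$, I would read off the coefficient of $z^{2r}$. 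For $r\ge 2$ the two contributions combine through $\frac{q^{r-1}}{1+q^{r-1}}-\frac{q^{r+1}}{1+q^{r+1}}=\frac{q^{r-1}(1-q^2)}{(1+q^{r-1})(1+q^{r+1})}$ and, after multiplication by $\frac{1+q}{1-q}$, become exactly $2(-1)^r\frac{q^{r-1}(1+q)^2}{(1+q^{r+1})(1+q^{r-1})}$; the constant term collapses to $1$; and the $r=1$ term, which also picks up contributions from the $1-z^2$ summand and from the low-order end of the $1-z^4$ factor, is checked directly to fit the same closed form.

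I expect the only real difficulty to be bookkeeping: getting the three residues right, correctly tracking the shifts when the two power series are reindexed to a common power of $z$, and separately verifying the $r=0$ and $r=1$ boundary terms, which do not arise from the generic term of either series. There is no conceptual obstacle, and all interchanges of summation are justified since $q\in(-1,0)$ (with $q+q^{-1}=-n$, $n>2$) gives $|q|<1$, so every series involved converges absolutely on $|z|=1$.
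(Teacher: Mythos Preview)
Your proposal is correct and follows essentially the same approach as the paper: the paper's own proof simply says that the first expression is the partial-fraction (pole) decomposition of the $G(z)$ from Theorem~\ref{thm-main-computation} and that the second is its power-series expansion, and your argument carries out exactly these two steps in detail. The partial-fraction identity, the telescoping/regrouping into $2S-h(0)$, and the coefficient checks for $r=0,1$ and $r\ge 2$ all go through as you describe.
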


\begin{proof}
The first expression is the decomposition
of $G(z)$ from Theorem 
\ref{thm-main-computation}
as a sum over its poles. 
The second is its power series expansion.
\end{proof}

This results in the computation of moments:

\begin{theorem}
\label{thm-moments}
The even moments of $w=\sqrt{n+2}\,u_{ij}$ are given by
\[\int w^{2k}=\frac{q+1}{q-1}\cdot\frac{1}{k+1}\sum_{r=-k-1}^{k+1}(-1)^r\begin{pmatrix}2k+2\cr k+1+r\end{pmatrix}
\frac{r}{1+q^r}\]
and the odd moments are all zero.
\end{theorem}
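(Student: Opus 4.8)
The plan is to extract the even moments directly from the circular-density formula in Theorem \ref{thm-circular-density}. By definition of circular density, we have
\[\int w^{2k}=\int_{\mathbb T}F(z)(z+z^{-1})^{2k}\frac{dz}{2\pi iz},\]
and since $F(z)=F(z^{-1})$ we may replace $F$ by $G$ in this integral (the symmetrization averages $G(z)$ and $G(z^{-1})$, but $(z+z^{-1})^{2k}$ is itself symmetric under $z\to z^{-1}$, so the two contributions are equal). So it suffices to compute $\int_{\mathbb T}G(z)(z+z^{-1})^{2k}\,dz/(2\pi i z)$.

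The key computation is a contour-integral / residue extraction: $\int_{\mathbb T}z^m\,dz/(2\pi iz)=\delta_{m,0}$, so the integral picks out the constant term in the Laurent expansion of $G(z)(z+z^{-1})^{2k}$. First I would expand $(z+z^{-1})^{2k}=\sum_{j=0}^{2k}\binom{2k}{j}z^{2k-2j}=\sum_{s=-k}^{k}\binom{2k}{k+s}z^{2s}$. Using the power series expansion $G(z)=1+2\sum_{r=1}^\infty(-1)^r c_r z^{2r}$ with $c_r=\dfrac{q^{r-1}(1+q)^2}{(1+q^{r+1})(1+q^{r-1})}$ from Theorem \ref{thm-circular-density}, the constant term of the product is obtained by pairing $z^{2r}$ from $G$ with $z^{-2r}$ from the binomial expansion, i.e.\ $s=-r$. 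This gives
\[\int w^{2k}=\binom{2k}{k}+2\sum_{r=1}^{k}(-1)^r c_r\binom{2k}{k+r}.\]
Here I should note that this already converges to the right thing and the only remaining work is to massage it into the stated symmetric form.

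The remaining step is purely algebraic: I must show that
\[\binom{2k}{k}+2\sum_{r=1}^{k}(-1)^r\frac{q^{r-1}(1+q)^2}{(1+q^{r+1})(1+q^{r-1})}\binom{2k}{k+r}
=\frac{q+1}{q-1}\cdot\frac{1}{k+1}\sum_{r=-k-1}^{k+1}(-1)^r\binom{2k+2}{k+1+r}\frac{r}{1+q^r}.\]
The natural route is to manipulate the right-hand side. First symmetrize the RHS in $r\to-r$: pairing the $r$ and $-r$ terms and using $\binom{2k+2}{k+1+r}=\binom{2k+2}{k+1-r}$ together with $\dfrac{r}{1+q^r}+\dfrac{-r}{1+q^{-r}}=\dfrac{r(1-q^r)}{1+q^r}$, the RHS collapses to a sum over $r=1,\dots,k+1$ (the $r=0$ term vanishes). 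Then I would use the partial-fraction-type identity relating $\binom{2k+2}{k+1+r}$ to $\binom{2k}{k+r}$ — concretely $\dfrac{r}{(k+1)}\binom{2k+2}{k+1+r}=\binom{2k+1}{k+r}-\binom{2k+1}{k+1+r}$, equivalently a telescoping/Pascal manipulation that lowers $2k+2$ to $2k$ — to convert the binomials. The factor $\dfrac{q+1}{q-1}\cdot\dfrac{1-q^r}{1+q^r}$ needs to be reconciled with $\dfrac{q^{r-1}(1+q)^2}{(1+q^{r+1})(1+q^{r-1})}$ after the binomial identity shifts the index; the identity $\dfrac{1-q^r}{(1+q^{r+1})(1+q^{r-1})}-\dfrac{1-q^{r-2}}{(1+q^{r-1})(1+q^{r-3})}$-type telescoping in $r$ is what should produce the clean product form. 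I expect the main obstacle to be organizing this algebraic identity cleanly — keeping track of the telescoping in $r$ on the $q$-dependent coefficients simultaneously with the Pascal-rule telescoping on the binomials, and handling the boundary terms at $r=0$, $r=k$ and $r=k+1$ correctly. An alternative, possibly cleaner, approach would be to verify both sides satisfy the same recurrence in $k$ (both are moments of a compactly supported measure, so a three-term recurrence from the Jacobi matrix $M$ is available) together with agreement at $k=0,1$, but the direct binomial identity seems most self-contained.
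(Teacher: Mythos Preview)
Your proposal is correct and follows essentially the same route as the paper: replace $F$ by $G$ via the symmetry of $(z+z^{-1})^{2k}$, extract the constant term using the power-series expansion of $G$ from Theorem~\ref{thm-circular-density} to obtain $\binom{2k}{k}+2\sum_{r=1}^{k}(-1)^r c_r\binom{2k}{k+r}$, and then massage this algebraically into the stated symmetric form. The paper carries out that last step through an intermediate expression $2\,\frac{1+q}{1-q}\sum_{r=0}^{k}(-1)^r\binom{2k+1}{k-r}\frac{1-q^{2r+1}}{(1+q^r)(1+q^{r+1})}$, which is exactly the Pascal-plus-$q$-telescoping manipulation you anticipate.
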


\begin{proof}
First, observe that if $F(z)=\frac{1}{2}(G(z)+G(z^{-1}))$, then for any continuous function
$\phi$, we can substitute $F$ with $G$ in the evaluation:
\[
\int \phi(w)
=\int_{\mathbb T} F(z) \phi(z+z^{-1}) \frac{dz}{2\pi i z}
=\int_{\mathbb T} G(z) \phi(z+z^{-1}) \frac{dz}{2\pi i z}
\]

Now we apply Theorem 5.1:
\begin{align*}
\int w^{2k}
&=\int_{\mathbb T}G(z)(z+z^{-1})^{2k}\frac{dz}{2\pi iz}\\
&=\int_{\mathbb T}\left(1+2\sum_{r=1}^\infty(-1)^r\frac{q^{r-1}(1+q)^2}{(1+q^{r+1})(1+q^{r-1})}z^{2r}\right)(z+z^{-1})^{2k}\frac{dz}{2\pi iz}\\
&=\begin{pmatrix}2k\cr k\end{pmatrix}+\sum_{r=1}^k(-1)^r\begin{pmatrix}2k\cr k-r\end{pmatrix}\frac{q^{r-1}(1+q)^2}{(1+q^{r+1})(1+q^{r-1})}
\end{align*}
The computation can be continued as follows:
\begin{align*}
\int w^{2k}
&=2\,\frac{1+q}{1-q}\sum_{r=0}^k(-1)^r\begin{pmatrix}2k+1\cr k-r\end{pmatrix}\frac{1-q^{2r+1}}{(1+q^r)(1+q^{r+1})}\\
&=\frac{q+1}{q-1}\left(\frac{(2k)!}{k!(k+1)!}+\frac{2}{k+1}\sum_{r=1}^{k+1}\begin{pmatrix}2k+2\cr k+1+r\end{pmatrix}\frac{r}{1+q^r}\right)\\
&=\frac{q+1}{q-1}\cdot\frac{1}{k+1}\sum_{r=-k-1}^{k+1}(-1)^r\begin{pmatrix}2k+2\cr k+1+r\end{pmatrix}\frac{r}{1+q^r}
\end{align*}
This gives the formula in the statement.
\end{proof}
Finally, let us record a statement in terms of the unnormalized variable $u_{ij}$.
\begin{theorem}
For any integer $n\geq 2$ the law of $u_{ij}$ has the following properties:
\begin{enumerate}
\item The support is $[-2/\sqrt{n+2},2/\sqrt{n+2}]$.
\item There are no atoms.
\item The density is analytic.
\item The density of $\sqrt{n+2}\,u_{ij}$ tends uniformly towards the density of the semicircle
distribution as $n\to\infty$.
\end{enumerate}
\end{theorem}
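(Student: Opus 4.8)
The plan is to read off all four statements from the explicit formula for the circular density $F$ of $w=\sqrt{n+2}\,u_{ij}$ obtained in Theorem \ref{thm-circular-density}. The first move is to turn this circular density into an honest density on the real line: substituting $x=2\cos t=z+z^{-1}$, $z=e^{it}$, $t\in(0,\pi)$, in the relation $\int\phi(w)=\frac1\pi\int_0^\pi F(e^{it})\phi(2\cos t)\,dt$ and using $dx=-\sqrt{4-x^2}\,dt$ shows that on $(-2,2)$ the law of $w$ is absolutely continuous with density
\[\rho(x)=\frac{F(e^{it})}{\pi\sqrt{4-x^2}},\qquad t=\arccos(x/2).\]
Since $F$ is bounded on $\mathbb T$ and $(4-x^2)^{-1/2}\in L^1(-2,2)$, we have $\rho\in L^1$, and since $\int\phi(w)=\int_{-2}^2\phi\,\rho$ holds for all continuous $\phi$, the law of $w$ is exactly $\rho(x)\,dx$. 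In particular it has no atoms, which rescales to (2); moreover $\rho\ge0$, hence $F(e^{it})\ge0$ on $(0,\pi)$, a fact used below.

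For (3), note from the power series form in Theorem \ref{thm-circular-density} that $G(z)=1+2\sum_{r\ge1}(-1)^r c_r z^{2r}$ with $c_r=q^{r-1}(1+q)^2/\bigl((1+q^{r+1})(1+q^{r-1})\bigr)$, and since $|c_r|^{1/r}\to|q|<1$ this series, hence $G$, is holomorphic on $\{|z|<|q|^{-1/2}\}$; consequently $G(z^{-1})$ is holomorphic on $\{|z|>|q|^{1/2}\}$ and $F=\tfrac12(G(z)+G(z^{-1}))$ is holomorphic on an annulus around $\mathbb T$, and on $\mathbb T$ it coincides with the real-valued function $\mathrm{Re}\,G(z)$ (the $c_r$ being real). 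Composing with the real-analytic maps $x\mapsto\arccos(x/2)\mapsto e^{i\arccos(x/2)}$ on $(-2,2)$ and dividing by the real-analytic, nonvanishing $\sqrt{4-x^2}$ shows $\rho$ is real-analytic on $(-2,2)$; this rescales to (3).

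For (1), the point is a positivity statement on $\mathbb T$. Since $q\in(-1,0)$, the factor $q^{r-1}$ has sign $(-1)^{r-1}$ while $(1+q)^2$ and $1+q^{r\pm1}$ are positive, so $d_r:=(-1)^{r+1}c_r=|c_r|>0$ for every $r$, and on $\mathbb T$ one computes $F(e^{it})=1-2\sum_{r\ge1}d_r\cos(2rt)$. From $F\ge0$ at $t=0$ we get $\sum_r d_r\le\tfrac12$, and then for $t\in(0,\pi)$,
\[F(e^{it})=1-2\sum_{r\ge1}d_r\cos(2rt)\ \ge\ 1-2\sum_{r\ge1}d_r\ \ge\ 0,\]
with equality in the first step only if $\cos(2rt)=1$ for all $r$ (as every $d_r>0$), which forces $t\in\pi\mathbb Z$. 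Hence $\rho>0$ on $(-2,2)$ and $\rho\equiv0$ off $[-2,2]$, so the support of $w$ is $[-2,2]$ and that of $u_{ij}=w/\sqrt{n+2}$ is $[-2/\sqrt{n+2},2/\sqrt{n+2}]$.

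Finally, (4). As $n\to\infty$ the relevant root obeys $q\to0^-$. Evaluating any of the product forms of $G$ at $z=\pm1$ gives $G(\pm1)=0$, hence the exact identity $\sum_{r\ge1}d_r=\tfrac12$; together with $d_1=(1+q)^2/\bigl(2(1+q^2)\bigr)\to\tfrac12$, the uniform bound $d_r\le|q|^{r-1}/(1-|q|)^2$, and the relation $1-2d_1=2\sum_{r\ge2}d_r$, one writes
\[F^{(n)}(e^{it})-2\sin^2 t=2\sum_{r\ge2}d_r\bigl(\cos 2t-\cos 2rt\bigr),\]
so that $|F^{(n)}(e^{it})-2\sin^2 t|\le 4|\sin t|\sum_{r\ge2}(r+1)d_r=O(|q|)\cdot|\sin t|$ via $|\cos 2t-\cos 2rt|\le2(r+1)|\sin t|$. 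Dividing by $\pi\sqrt{4-x^2}=2\pi|\sin t|$ and recognizing $2\sin^2 t/(\pi\sqrt{4-x^2})$ as the density of the semicircle law on $[-2,2]$ yields uniform convergence of the densities. The main obstacle is precisely this last step: the square-root factor $(4-x^2)^{-1/2}$ blows up at the edges $x=\pm2$, so uniform (rather than merely weak or $L^1$) convergence forces one to use that $F^{(n)}(e^{it})-2\sin^2 t$ vanishes at $t=0,\pi$, which in turn rests on the exact value $\sum_r d_r=\tfrac12$, i.e.\ $G(\pm1)=0$; the equality-case analysis in (1) is a secondary subtlety.
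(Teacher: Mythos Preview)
Your approach --- reading off (1)--(4) from the explicit circular density $F$ of Theorem~\ref{thm-circular-density} --- is exactly the paper's strategy, but your execution is considerably more thorough. The paper simply asserts that (1)--(3) follow ``because the circular density $F$ appearing there is an analytic function'' and that (4) follows by ``letting $q\to0$'' in the power-series form of $G$. You supply two pieces the paper glosses over: the strict positivity $F(e^{it})>0$ on $(0,\pi)$, which is needed to conclude that the support is the \emph{full} interval $[-2,2]$ rather than a proper closed subset; and the edge control in (4), where the factor $(4-x^2)^{-1/2}$ forces you to exploit the exact identity $\sum_r d_r=\tfrac12$ (equivalently $G(\pm1)=0$) to upgrade pointwise to uniform convergence of the densities. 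Both additions are sound.

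There is, however, one genuine gap: the theorem is stated for all integers $n\ge2$, but Theorem~\ref{thm-circular-density} (and hence your entire argument) requires $q\in(-1,0)$, i.e.\ $n>2$. At $n=2$ one has $q=-1$; the prefactor $(1+q)/(1-q)$ vanishes while the denominators $1+q^{r\pm1}$ may vanish as well, so the formulas for $G$ and the coefficients $c_r$, $d_r$ degenerate and your sign and convergence analysis no longer applies. The paper treats $n=2$ separately, invoking the result of \cite{bc1} that $u_{11}^2$ is uniformly distributed on $[0,1]$, from which (1)--(3) are immediate; you should add a sentence doing the same. Statement (4) concerns $n\to\infty$ and is unaffected.
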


\begin{proof}
We first handle the case $n > 2$.
The three first assertions follow from Theorem 
\ref{thm-main-computation},
because the circular density $F$ appearing there is an analytic function.
The last assertion results from letting $q\to 0$ in the second equation of Theorem 5.1.

In the case $n=2$ we only have to consider the first three assertions, and the result follows 
from the fact proved in \cite{bc1} that $u_{11}^2$ has the uniform distribution
on $[0,1]$.
\end{proof}
Observe that the last point considerably strengthens a result obtained in \cite{bc1}
where it was proved that $\sqrt{n}\,u_{11}$ is asymptotically semicircular
in the sense of Voiculescu.

\section{Concluding remarks}

We have seen in this paper that the variable $u_{ij}\in A_o(n)$ can be modelled by a 
variable over $SU^q_2$, whose law can be explicitely computed.

It is natural to wonder whether similar methods can be applied to other variables in $A_o(n)$. For instance it is known since \cite{ban} that the character of the fundamental corepresentation $\chi=\Sigma u_{ii}$ is semicircular for all the algebras $A_o(F)$, so as a consequence we can state that ``the variable $\chi\in A_o(n)$ is modelled by the variable $\chi\in C(SU^q_2)$''.
We believe, however, that with suitable definitions, the variables $u_{ij}$ and $\chi$ are in fact the only ones to have models over $SU^q_2$. A concrete result in this sense can be obtained by considering linear combinations of basic generators, $x_A=Tr(uA)$. The Weingarten formula shows that the law of such a variable appears as a kind of polynomial in $A,A^t,F,F^t$, and the modelling problem ultimately reduces to that of finding $n\times n$ and $2\times 2$ matrices having the same law, known to have only 2 solutions.

Most of the methods in this paper apply as well to the algebras $A_o(F)$ with $F^2=-1$. These include the algebras $C(SU^q_2)$ with $q\in (0,1]$. However, the situation is a bit different from the one in the case $F^2=1$, because $n$ must be even, and because there is no ``canonical'' algebra, similar to $A_o(n)$, at each such $n$. A natural question here is whether $A_o(n)$ with $n$ even has a natural twisting, but we don't know the answer.

The Weingarten function for $A_o(n)$, as well as some related quantities, like the law of $u_{ij}$ computed in this paper, are closely related to the combinatorics of the meander determinants, computed in Di Francesco, Golinelli and Guitter in \cite{dgg}. It is not clear, however, how the main results in \cite{dgg} are related to the main results in this paper.

Finally, an important question is that of understanding the relationship between the law of the basic coordinates $u_{ij}\in A_o(n)$ computed in this paper, and the hyperspherical law, which appears as law of the basic coordinates $u_{ij}\in C(O_n)$. 
In the general framework of Voiculescu's free probability theory \cite{vdn}, the basic correspondence between classical and free is that provided by the Bercovici--Pata bijection \cite{bpa}. However, in the present context this bijection appears only in the limit $n\to\infty$, and the case where $n$ is fixed corresponds to the above problem.

\section{Appendix: pictures and computations}

In this appendix, we present a few figures and computations. First, we compare our original result from \cite{bc1}, namely Theorem \ref{thm-weingarten} above (applied to the specific case of the variable $u_{11}$) with the new result in this paper, namely Theorem \ref{thm-moments} above.

The purpose of this comparison is twofold. First, it serves as an algebraic verification of Theorem  \ref{thm-moments}, and secondly, it unveils the computational power of this theorem. 

We check the moments of $\sqrt{n+2}u_{11}$ for any $n>2$, up to order $7$. There is nothing to do for orders $1,3,5,7$, as the variables
are symmetric and the moments are zero.

According to Theorem \ref{thm-weingarten}, the moments of order $2,4,6$ of $\sqrt{n+2}u_{11}$ are:
$$M_2=\frac{n+2}{n}$$
$$M_4=(n+2)^2 Tr \left(
  \begin{array}{cc}
    1 & 1 \\
    1 & 1
  \end{array} \right)
\left(
  \begin{array}{cc}
    n^2 & n \\
    n & n^2
  \end{array} \right)^{-1}
  =
  \frac{2(n+2)^2}{n(n+1)}$$
$$M_6=(n+2)^3  Tr \left(
  \begin{array}{ccccc}
    1 & 1 & 1 & 1 & 1 \\
        1 & 1 & 1 & 1 & 1 \\
    1 & 1 & 1 & 1 & 1 \\
    1 & 1 & 1 & 1 & 1 \\
    1 & 1 & 1 & 1 & 1 
      \end{array} \right)
  \left(
  \begin{array}{ccccc}
    n^3 & n^2 & n^2 & n & n^2 \\
    n^2 & n^3 & n & n^2 & n \\
n^2 & n & n^3 & n^2 & n\\
n & n^2 & n^2 & n^3 & n^2\\
    n^2 & n & n & n^2 & n^3
  \end{array} \right)^{-1}
  =
  \frac {(n+2)^3(5n-7)}{ ( n^2-2)  ( n+1) n}
  $$

Now we look at the formula of Theorem \ref{thm-moments}. This gives the following formulae for the same moments:
$$M_2=\frac{(q-1)^2}{(1+q^2)}$$
$$M_4=2\frac{(q-1)^4}{(q^2-q+1)(q^2+1)}$$
$$M_6=\frac{(q-1)^6(5q^2+7q+5)}{(1+q^2)(1+q^4)(q^2-q+1)}$$
 
It is obvious that setting $q+q^{-1}=-n$ yields indeed the same formulae as those coming from Theorem \ref{thm-weingarten}. It is interesting to observe that only with Theorem 
 \ref{thm-weingarten}, it is extremely difficult to obtain moments of order, say, more than 14,
 as the formal computation of the inverse of the Gram matrix is 
  of exponential complexity, whereas Theorem \ref{thm-moments}
 reduces it to a  problem
 of linear complexity.

The relationship between Theorem \ref{thm-moments} and Theorem 
\ref{thm-weingarten} is also a source of very interesting algebraic questions. 
For instance, it is a consequence of Theorem 
\ref{thm-weingarten} that the denominator of moments of $u_{ij}$ should
divide the meander determinant introduced in \cite{dgg}. However
we are not able to prove this directly from 
Theorem \ref{thm-moments}.
 
We end the paper with a few pictures illustrating the density of the rescaled $u_{11}$ for a few
values of $n$. Observe that the last picture is a plot of the density function with a noninteger
value of $n$ decreasing towards $2$. 

The plot is coherent with the fact that for $n=2$, $u_{11}$ is symmetric
and $u_{11}^2$ has a uniform distribution on the interval $[0,1]$ as shown in \cite{bc1}.
The validity of this phenomenon, although the formulae of Theorem \ref{thm-circular-density}
and
\ref{thm-moments}
are not clearly defined at $q=-1$, follows from the fact that, according to section 2,
the $k^{\rm th}$ moment of $u_{11}$ is a rational fraction in $n$ for any $n\geq 2$, and that these rational fractions have no poles in $[2,\infty )$ (see also \cite{dgg} for a direct proof of this result).

\begin{figure}[ht!]
\begin{center}\resizebox{!}{6cm}{\includegraphics{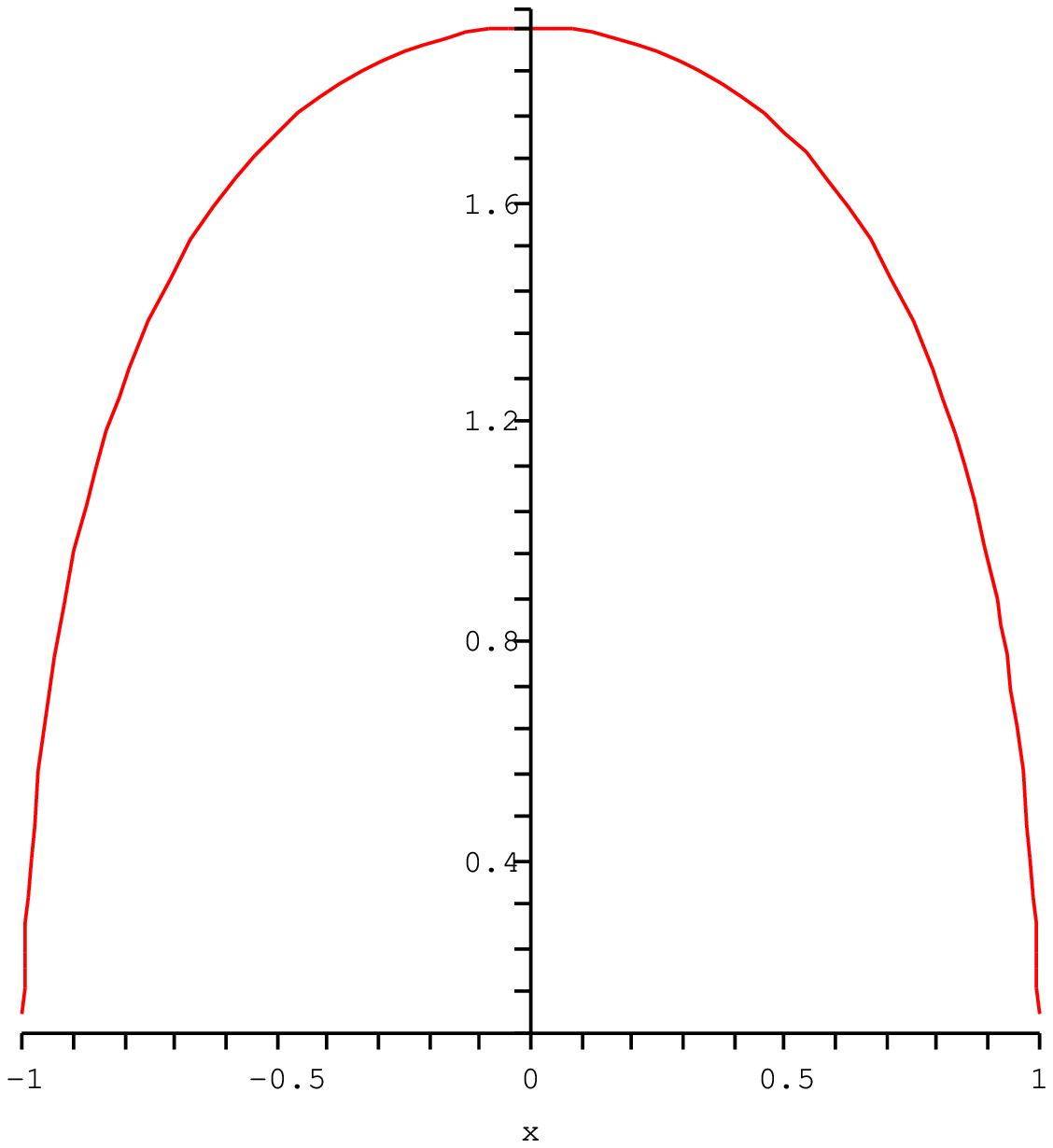}}
\end{center}
\caption{$n=50$}\label{map12}
\end{figure}

\begin{figure}[ht!]
\begin{center}\resizebox{!}{6cm}{\includegraphics{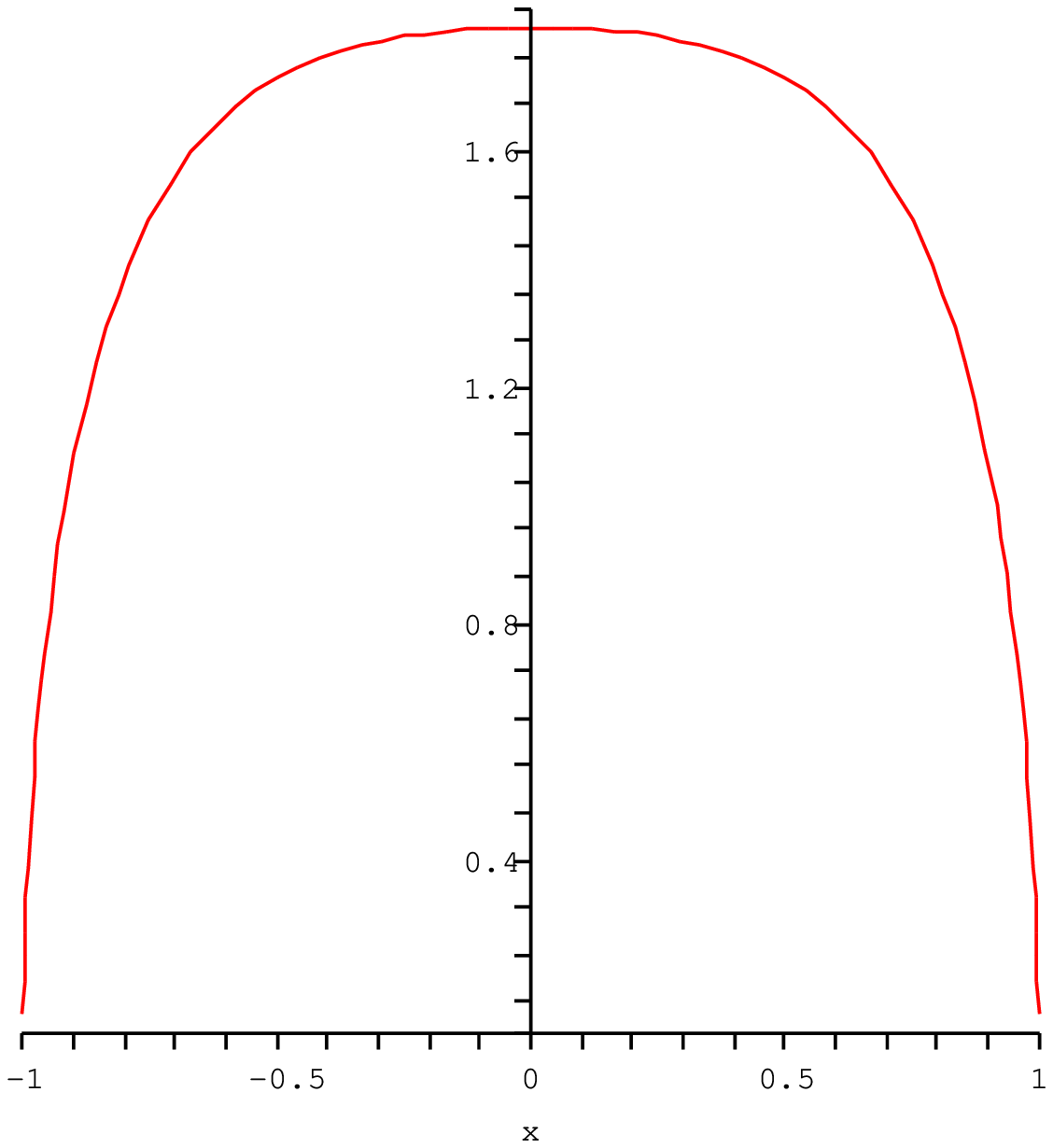}}
\end{center}
\caption{$n=20$}\label{map11}
\end{figure}

\begin{figure}[ht!]
\begin{center}\resizebox{!}{6cm}{\includegraphics{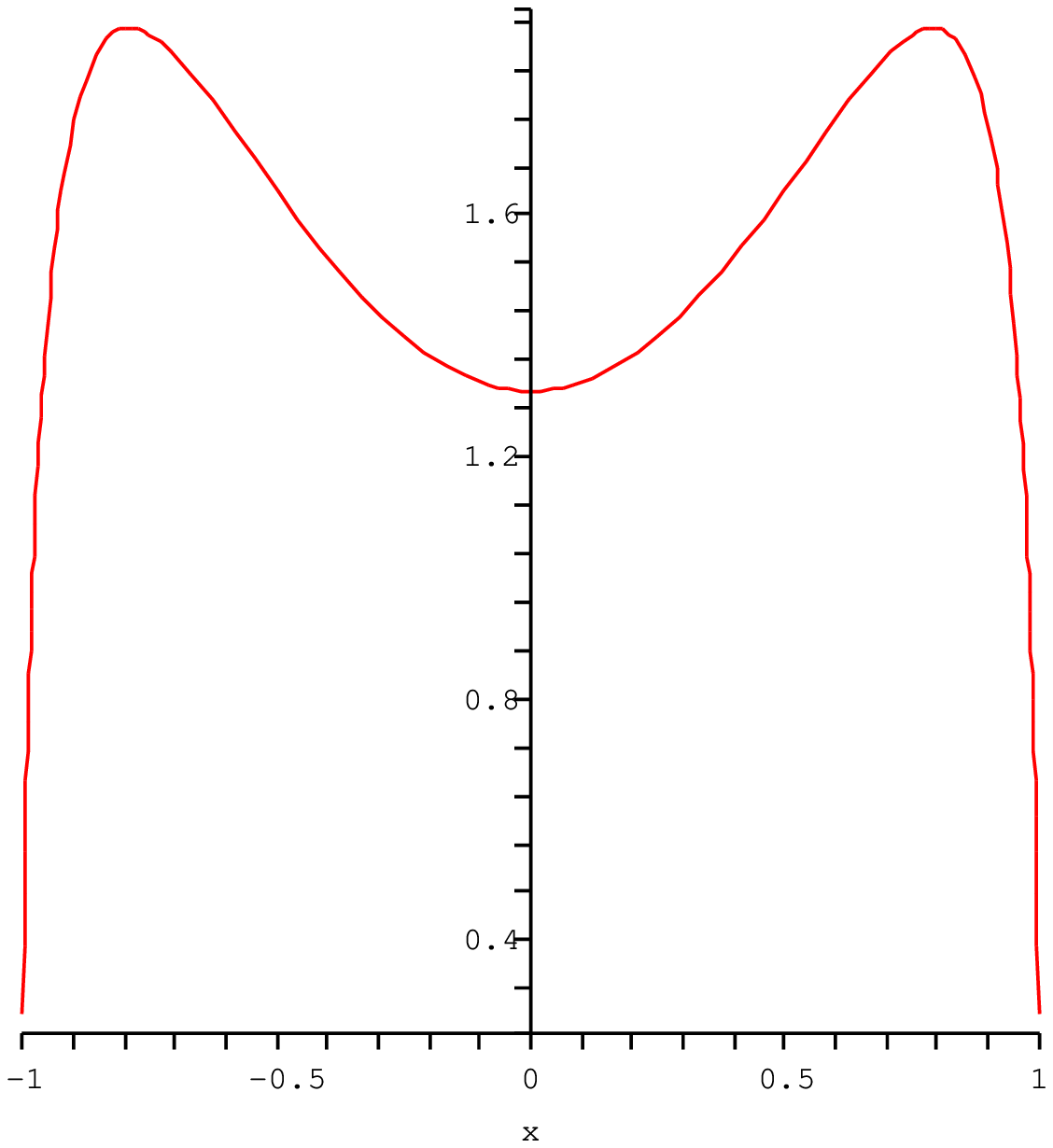}}
\end{center}
\caption{$n=5$}\label{map4}
\end{figure}

\begin{figure}[ht!]
\begin{center}\resizebox{!}{6cm}{\includegraphics{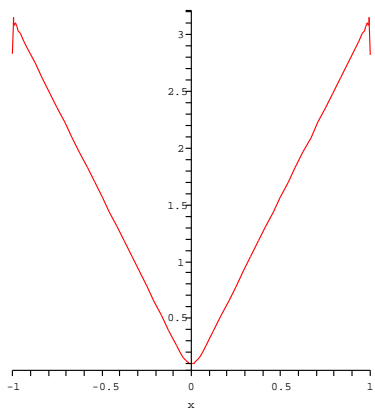}}
\end{center}
\caption{$n=2+\varepsilon$}\label{map13}
\end{figure}

\end{document}